\providecommand{\AMSclass}[1]{\textbf{\textit{AMS classification.}} #1}
\title{Bialgebras overs another bialgebras and quasishuffle double bialgebras}
\date{}
\author{Lo\"ic Foissy}
\affil{\small{Univ. Littoral Côte d'Opale, UR 2597
LMPA, Laboratoire de Mathématiques Pures et Appliquées Joseph Liouville
F-62100 Calais, France}.\\ Email: \texttt{foissy@univ-littoral.fr}}
\newcommand{\tdun}[1]{\begin{picture}(10,5)(-2,-1)
\put(0,0){\circle*{2}}
\put(3,-2){\tiny #1}
\end{picture}}
\newcommand{\tddeux}[2]{\begin{picture}(12,5)(0,-1)
\put(3,0){\circle*{2}}
\put(3,5){\circle*{2}}
\put(3,0){\line(0,1){5}}
\put(6,-3){\tiny #1}
\put(6,5){\tiny #2}
\end{picture}}
\newcommand{\gdtroisun}[3]{\begin{picture}(22,12)(-8,-1)
\put(3,0){\circle*{2}}
\put(6.5,7){\circle*{2}}
\put(-1,7){\circle*{2}}
\put(-1,7){\line(1,0){7.5}}
\put(-2.8,0){\Large $\vee$}
\put(5,-2){\tiny #1}
\put(9,5){\tiny #2}
\put(-8,5){\tiny #3}
\end{picture}}
\newcommand{\tdtroisdeux}[3]{\begin{picture}(12,15)(-2,-1)
\put(0,0){\circle*{2}}
\put(0,5){\circle*{2}}
\put(0,10){\circle*{2}}
\put(0,0){\line(0,1){5}}
\put(0,5){\line(0,1){5}}
\put(3,-4){\tiny #1}
\put(3,4){\tiny #2}
\put(3,12){\tiny #3}
\end{picture}}
\theoremstyle{plain}
\newtheorem{theo}{Theorem}[section]
\newtheorem{lemma}[theo]{Lemma}
\newtheorem{cor}[theo]{Corollary}
\newtheorem{prop}[theo]{Proposition}
\newtheorem{defi}[theo]{Definition}
\theoremstyle{remark}
\newtheorem{remark}{Remark}[section]
\newtheorem{notation}{Notations}[section]
\newtheorem{example}{Example}[section]
\newcommand{\K}{\mathbb{K}}
\newcommand{\N}{\mathbb{N}}
\newcommand{\Z}{\mathbb{Z}}
\newcommand{\bfG}{\mathbf{G}}
\newcommand{\id}{\mathrm{Id}}
\newcommand{\eq}{\mathcal{E}}
\newcommand{\Char}{\mathrm{Char}}
\newcommand{\tdelta}{\tilde{\Delta}}
\newcommand{\vect}{\mathrm{Vect}}
\renewcommand{\ker}{\mathrm{Ker}}
\newcommand{\QSh}{\mathrm{QSh}}
\newcommand{\Sh}{\mathrm{Sh}}
\newcommand{\QSym}{\mathbf{QSym}}
\newcommand{\Hom}{\mathrm{Hom}}
\newcommand{\calH}{\mathcal{H}}
\newcommand{\col}{\mathcal{C}}
\newcommand{\qsh}{\mathrm{QSh}}
\newcommand{\morH}{\Upsilon}
\begin{document}

\maketitle

\begin{abstract}
Quasishuffle Hopf algebras, usually defined on a commutative monoid, can be more generally defined on any associative algebra $V$.
If $V$ is a commutative and cocommutative bialgebra, the associated quasishuffle bialgebra $\qsh(V)$ inherits a second coproduct
$\delta$ of contraction and extraction of words, cointeracting with the deconcatenation coproduct $\Delta$, 
making $\qsh(V)$ a double bialgebra.
In order to generalize the universal property of the Hopf algebra of quasisymmetric functions $\QSym$ (a particular case
of quasishuffle Hopf algebra) as exposed by Aguiar, Bergeron and Sottile, we introduce the notion of double bialgebra over $V$.
A bialgebra over $V$ is a bialgebra in the category of right $V$-comodules and an extra condition is required on the second coproduct
for double bialgebras over $V$.

We prove that the quasishuffle bialgebra $\qsh(V)$ is a double bialgebra over $V$, and that it satisfies a universal property:
for any bialgebra $B$ over $V$ and for any character $\lambda$ of $B$, under a connectedness condition, there exists
a unique morphism $\phi$ of bialgebras over $V$ from $B$ to $\qsh(V)$ such that $\varepsilon_\delta \circ \phi=\lambda$. 
When $V$ is a double bialgebra over $V$, we obtain a unique morphism of double bialgebras over $V$ from $B$
to $\qsh(V)$, and show that this morphism $\phi_1$ allows to obtain any morphism of bialgebra over $V$ from $B$
to $\qsh(V)$ thanks to an action of a monoid of characters. This formalism is applied to a double bialgebra of $V$-decorated graphs. 
\end{abstract}

\AMSclass{16T05 05A05 68R15 16T30}

\tableofcontents

\section*{Introduction}

Quasishuffle bialgebras are Hopf algebras based on words, used in particular for the study of relations between multizêtas
\cite{Hoffman2000,Hoffman2020}. They also appear in Ecalle's mould calculus,
as a symmetrel mould can be interpreted as a character on a quasishuffle bialgebras  \cite{Ebrahimi-Fard2017-2}. 
Hoffman's construction is based on commutative countable semigroups, but it can be extended to any associative algebra $(V,\cdot)$,
not necessarily unitary \cite{Foissy40}. The associated quasishuffle bialgebra $\qsh(V)$ is, as a vector space, the tensor algebra $T(V)$. 
Its product is the quasishuffle product  $\squplus$, inductively defined as follows: if $x,y\in V$ and $v,w\in T(V)$,
\begin{align*}
1\squplus w&=w,\\
v\squplus 1&=v,\\
xv\squplus yw&=x(v\squplus yw)+y(xv\squplus w)+(x\cdot y)(v\squplus w). 
\end{align*}
For example, if $x,y,z,t\in V$,
\begin{align*}
x\squplus y&=xy+yx+x\cdot y,\\
xy\squplus z&=xyz+xzy+zxy+(x\cdot z)y+x(y\cdot z),\\
xy\squplus zt&=xyzt+xzyt+zxyt+xzty+zxty+ztxy\\
&+(x\cdot z)ty+(x\cdot z)yt+xz(y\cdot t)+zx(y\cdot t)+(x\cdot z)(y\cdot t). 
\end{align*}
The coproduct $\Delta$ is the deconcatenation: if $x_1,\ldots,x_n \in V$,
\[\Delta(x_1\ldots x_n)=\sum_{i=0}^n x_1\ldots x_i\otimes x_{i+1}\ldots x_n.\]
When $(V,\cdot,\delta_V)$ is a commutative bialgebra, not necessarily unitary, then $\qsh(V)$ inherits a second, 
less known coproduct $\delta$: if $x_1,\ldots,x_n \in V$,
\[\delta(v_1 \ldots  v_n)=\sum_{1\leq i_1<\ldots<i_p<k}
\left(\prod_{1\leq i\leq i_1}^\cdot v'_i\right)\ldots  \left(\prod_{i_p+1\leq i\leq k}^\cdot v'_i\right)
\otimes (v''_1 \ldots v''_{i_1})\squplus \ldots \squplus (v''_{i_p+1} \ldots  v''_k),\]
with Sweedler's notation for $\delta_V$ and where the symbols $\displaystyle \prod^\cdot$ 
mean that the products are taken in $(V,\cdot)$.
The counit $\epsilon_\delta$ is given as follows: for any word $w$ of length $n\geq 1$,
\[\epsilon_\delta(w)=\begin{cases}
\epsilon_V(w)\mbox{ if }n=1,\\
0\mbox{ otherwise}. 
\end{cases}\]
Then $(T(V),\squplus,\delta)$ is a bialgebra, and $(T(V),\squplus,\Delta)$ is a bialgebra in the category of right
$(T(V),\squplus,\delta)$-comodules, which in particular implies that 
\[(\Delta \otimes \id)\circ \delta=\squplus_{1,3,24}\circ (\delta \otimes \delta)\circ \Delta,\]
where $\squplus_{1,3,24}:T(V)^{\otimes 4}\longrightarrow T(V)^{\otimes 3}$ send $w_1\otimes w_2\otimes w_3 \otimes w_4$
to $w_1\otimes w_3\otimes w_2\squplus w_4$. 
Two particular cases will be considered all along this paper:
\begin{itemize}
\item $V=\K$, with its usual bialgebraic structure. The quasishuffle algebra $\qsh(\K)$ is isomorphic
to the polynomial algebra $\K[X]$, with its two coproducts defined by
\begin{align*}
\Delta(X)&=X\otimes 1+1\otimes X,&\delta(X)&=X\otimes X.
\end{align*}
\item $V$ is the algebra of the semigroup $(\N_{>0},+)$. We recover the double Hopf algebra
of quasisymmetric functions $\QSym$ \cite{Gelfand1995,Hazewinkel2005,Malvenuto2011,Stanley1999}.
This Hopf algebra is studied in \cite{Aguiar2006-2}, where it is proved to be the terminal object in a category of combinatorial 
Hopf algebras: If $B$ is a graded and connected Hopf algebra and $\lambda$ is a character of $B$,
then there exists a unique homogeneous Hopf algebra morphism $\phi_\lambda:B\longrightarrow \QSym$
such that $\epsilon_\delta \circ \phi_\lambda=\lambda$. We proved in \cite{Foissy37,Foissy36} that when $(B,m,\Delta,\delta)$
is a double bialgebra, such that:
\begin{itemize}
\item $(B,m,\Delta)$ is a graded and connected Hopf algebra,
\item for any $n\in \N$, $\delta(B_n)\subseteq B_n\otimes B$, 
\end{itemize} 
then $\phi_{\varepsilon_\delta}$ is the unique homogeneous double bialgebra
morphism from $B$ to $\QSym$. A similar result exists for $\K[X]$, where the hypothesis "graded and connected" 
on $B$ is replaced by the weaker hypothesis "connected".
\end{itemize}

In this paper, we generalize these results to any quasishuffle $\qsh(V)$ associated to a commutative and cocommutative
bialgebra $(V,\cdot,\delta_V)$, not necessarily unitary. We firstly show that $(T(V),\cdot,\Delta)$ 
is a bialgebra in the category of right $V$-comodules, with the coaction $\rho$ defined by
\begin{align*}
&\forall v_1,\ldots,v_n \in V,&
\rho(v_1\ldots v_n)&=v_1'\ldots v_n'\otimes v_1''\cdot \ldots \cdot v_n''.
\end{align*}
Moreover, the second coproduct $\delta$ satisfies this compatibility with $\rho$:
\[(\id \otimes c)\circ (\rho\otimes \id)\circ \delta=(\delta \otimes \id)\circ \rho,\]
where $c:V\otimes T(V)\longrightarrow T(V)\otimes V$ is the usual flip. Equivalently, $(T(V),\squplus,\Delta)$ is a comodule
over the coalgebra $(V,\delta_V^{op})\otimes (T(V),\delta)$. This observation leads us to study bialgebras over $V$, that is to say
bialgebras in the category of right $(V,\cdot,\delta_V)$-comodules (Definition \ref{defioverV} when $V$ is unitary).
Technical difficulties occur when $V$ is not unitary, a  case that cannot be neglected as it includes $\QSym$:
this is the object of Definition \ref{Defi1.7}, where we use the unitary extension $uV$ of $V$, which is also a bialgebra.
We define double bialgebras over $V$ in Definition \ref{defi1.4} in the unitary case and Definition \ref{Defi1.7} in the nonunitary case.
When $V=\K$, bialgebras over $V$ are bialgebras $B $ with a decomposition $B=B_1\oplus B_{\overline{1}}$,
where $B_1$ is a subbialgebra and $B_{\overline{1}}$ is a biideal. This includes any bialgebra $B$, taking $B_1=\K1_B$ and $B_{\overline{1}}$ the kernel
of the counit.
 When $V=\K(\N_{>0},+)$, bialgebras over $V$ are $\N$-graded and connected bialgebras,
in other words $\N$-graded bialgebras $B$ with $B_0=\K1_B$. 

We prove that the antipode of a bialgebra $(B,m,\Delta,\rho)$ over $V$, such that $(B,m,\Delta)$ is a Hopf algebra,
is automatically a comodule morphism (Proposition \ref{prop1.2}), that is to say
\[\rho\circ S=(S\otimes \id_V)\circ \rho.\]
In the case of $\N$-graded bialgebras, this means that $S$ is automatically homogeneous; more generally,
if $\Omega$ is a commutative semigroup and $B$ is an $\Omega$-graded bialgebra and a Hopf algebra, then its antipode
is automatically $\Omega$-homogeneous. 

Let us now consider the double quasishuffle algebra $\qsh(V)=(T(V),\squplus,m,\Delta,\delta)$, 
which is over $V$ with the coaction $\rho$.
We obtain a generalization of Aguiar, Bergeron and Sottile's result: Theorem \ref{theomorphismes} states that for any 
connected bialgebra $B$ over $V$ and for any character $\lambda$ of $B$, there exists a unique morphism $\phi_\lambda$
from $B$ to $\qsh(V)$ of bialgebras over $V$ such that $\epsilon_\delta \circ \phi_\lambda=\lambda$, given by an explicit formula
implying the iterations of the reduced coproduct $\tdelta$ associated to the coproduct $\Delta$ of $B$. 

When $B$ is moreover a double bialgebra over $V$, we prove that the unique morphism of double bialgebras over $V$
from $B$ to $\qsh(V)$ is $\Phi_{\epsilon_\delta}$ (Theorem \ref{theo7.10}).
Moreover, for any bialgebra $B'$ over $V$, the second coproduct $\delta$
induces an action $\leftsquigarrow$ of the monoid of characters $\Char(B)$ (with the product induced by $\delta$)
onto the set of morphisms of bialgebras over $V$ from $B$ to $B'$ (Proposition \ref{prop5.19}. When $B'=\qsh(V)$,
we obtain that this action is simply transitive (Corollary \ref{cormorphismedouble}), which gives a bijection
between the set of characters of $B$ and the set of morphisms of double bialgebras over $V$ from $B$ to $\qsh(V)$. 
This is finally applied to the twisted bialgebra of graphs $\bfG$: for any $V$, we obtain a double bialgebra 
$\calH_V$ of $V$-decorated graphs, and the unique morphism of double bialgebras over $V$ from
$\calH_V$ to $\qsh(V)$ is a generalization of the chromatic polynomial and of the chromatic (quasi)symmetric series. 
Taking $V=\K$ or $\K(\N_{>0},+)$, we recover the terminal property ok $\K[X]$ and $\QSym$. \\

\textbf{Acknowledgements}. 
%The author thanks the anonymous referee for his or her helpful comments. 
The author acknowledges support from the grant ANR-20-CE40-0007
\emph{Combinatoire Algébrique, Résurgence, Probabilités Libres et Opérades}.\\

\begin{notation} \begin{enumerate}
\item We denote by $\K$ a commutative field of characteristic zero. Any vector space in this field will be taken over $\K$.
\item For any $n\in \N$, we denote by $[n]$ the set $\{1,\ldots,n\}$. In particular, $[0]=\emptyset$.
\end{enumerate}\end{notation}

\section{Bialgebras over another bialgebra}

\subsection{Définitions and notations}

Let $(V,\cdot,\delta_V)$ be a commutative bialgebra, which we firstly assume to be unitary and counitary.
Its counit is denoted by $\epsilon_V$ and its unit by $1_V$.

\begin{defi} \label{defioverV}
A bialgebra over $V$ is a bialgebra in the category of right $V$-comodules, that is to say a family $(B,m,\Delta,\rho)$ 
where $(B,m,\Delta)$ is a bialgebra and $\rho:B\longrightarrow B\otimes V$ such that:
\begin{itemize}
\item $\rho$ is a right coaction of $V$ over $B$:
\begin{align*}
(\rho\otimes \id_V)\circ \rho&=(\id_B\otimes \delta_V)\circ \rho,&(\id_B\otimes \epsilon_V)\circ \rho&=\id_B. 
\end{align*} 
\item The unit of $B$ is a $V$-comodule morphism:
\[\rho(1_B)=1_B\otimes 1_V.\]
\item The product $m$ of $B$ is a $V$-comodule morphism:
\[\rho\circ m=(m\otimes  \cdot)\circ (\id \otimes c\otimes \id)\circ (\rho \otimes \rho),\]
where $c:B\otimes B\longrightarrow B\otimes B$ is the usual flip, sending $a\otimes b$ to $b\otimes a$.
\item The counit $\varepsilon_\Delta$ of $B$ is a $V$-comodule morphism:
\begin{align*}
&\forall x\in B,&(\varepsilon_\Delta\otimes \id)\circ \rho(x)&=\varepsilon_\Delta(x)1_V.
\end{align*}
\item The coproduct $\Delta$ of $B$ is a $V$-comodule morphism:
\[(\Delta\otimes \id)\circ \rho=m_{1,3,24}\circ (\rho\otimes \rho)\circ \Delta,\]
 where
\[m_{1,3,24}:\left\{\begin{array}{rcl}
B\otimes V\otimes B\otimes V&\longrightarrow&B\otimes B\otimes V\\
b_1\otimes v_2\otimes b_3\otimes v_4&\longrightarrow&b_1\otimes b_3\otimes v_2\cdot v_4. 
\end{array}\right.\]
\end{itemize}\end{defi}

Notice that the second and third items are equivalent to the fact that $\rho$ is an algebra morphism.

\begin{example}
\begin{itemize}
\item Let $(\Omega,\star)$ be a monoid and let $V=\K\Omega$ be the associated bialgebra. 
Let $B$ be a bialgebra over $V$. For any $\alpha\in \Omega$, we put
\[B_\alpha=\{x\in B\mid \rho(x)=x\otimes \alpha\}.\]
Then $\displaystyle B=\bigoplus_{\alpha \in \Omega} B_\alpha$. 
Indeed, if $x\in B$, we can write
\[\rho(x)=\sum_{\alpha\in \Omega} x_\alpha \otimes \alpha.\]
Then
\[(\rho\otimes \id)\circ \rho(x)=\sum_{\alpha\in \Omega} \rho(x_\alpha) \otimes \alpha
=(\id \otimes \delta_V)\circ \rho(x)=\sum_{\alpha\in \Omega} x_\alpha \otimes \alpha\otimes \alpha.\]
Therefore, for any $\alpha \in \Omega$, $\rho(x_\alpha)=x_\alpha \otimes \alpha$, that is to say $x_\alpha \in B_\alpha$.
Moreover,
\[x=(\id \otimes \epsilon_V)\circ \rho(x)=\sum_{\alpha\in \Omega} x_\alpha.\]

The second item of Definition \ref{defioverV} is equivalent to $1_B\in B_{1_\Omega}$. The third item is equivalent to
\begin{align*}
&\forall \alpha,\beta \in \Omega,&B_\alpha B_\beta&\subseteq B_{\alpha\star\beta}.
\end{align*}
The fourth item is equivalent to $\displaystyle \bigoplus_{\alpha \neq 1_\Omega}B_\alpha\subseteq \ker(\varepsilon_\Delta)$.
The last item is equivalent to
\begin{align*}
&\forall \alpha \in \Omega,&\Delta(B_\alpha)&\subseteq \bigoplus_{\alpha'\star \alpha''=\alpha}
B_{\alpha'}\otimes B_{\alpha''}.
\end{align*}
In other words, a bialgebra over $\K\Omega$ is an $\Omega$-graded bialgebra.
\item Let $V=\K(\Z/2\Z,\times)$.
A bialgebra over $V$ admits a decomposition 
$B=B_{\overline{0}}\oplus B_{\overline{1}}$, with $1_B\in B_{\overline{0}}$, $\varepsilon_\Delta(B_{\overline{1}})=(0)$, and
\begin{align*}
B_{\overline{0}} B_{\overline{1}}&+B_{\overline{1}} B_{\overline{0}}+B_{\overline{1}} B_{\overline{1}}\subseteq B_{\overline{1}},&B_{\overline{0}} B_{\overline{0}}&\subseteq B_{\overline{0}},\\
\Delta(B_{\overline{1}})&\subseteq B_{\overline{1}}\otimes B_{\overline{0}}+B_{\overline{0}}\otimes B_{\overline{1}}+B_{\overline{1}}\otimes B_{\overline{1}},&
\Delta(B_{\overline{0}})&\subseteq B_{\overline{0}}\otimes B_{\overline{0}}.
\end{align*}
In other words, a bialgebra over $V$ is a bialgebra with a decomposition $B=B_{\overline{0}}\oplus B_{\overline{1}}$,
such that $B_{\overline{0}}$ is a subbialgebra and $B_{\overline{1}}$ is a biideal. In particular, any bialgebra $(B,m,\Delta)$ is trivially a bialgebra over $V$,
with $B_{\overline{0}}=\K 1_B$ and $B_{\overline{1}}=\ker(\varepsilon_\Delta)$, or equivalently, for any $x\in B$,
\[\rho(x)= \varepsilon(x)1_B\otimes 1+(x-\varepsilon(x)1_B)\otimes X.\]
\item Let $\Omega$ be a finite monoid and let $\K[\Omega]$ be the bialgebra of functions over $G$, dual of the bialgebra $\K \Omega$.
A bialgebra over $\K[\Omega]$ is a family $(B,m,\Delta,\triangleleft)$ where $(B,m,\Delta)$ is a bialgebra and $\triangleleft$ is a right action of $\Omega$ on $B$ such that:
\begin{align*}
&\forall x,y\in B,&&\forall \omega \in \Omega,&
(xy)\triangleleft \omega&=(x\triangleleft \omega)
(y\triangleleft \omega),\\
&\forall x\in B,&&\forall \omega \in \Omega,&
\Delta(x\triangleleft \omega)&=\Delta(x)\triangleleft (\omega\otimes \omega),\\
&&&\forall \omega \in \Omega,&1_B\triangleleft \omega&=1_B,\\
&\forall x\in B,&&\forall \omega \in \Omega,&
\varepsilon_\Delta(x\triangleleft \omega)&=\varepsilon_\Delta(x).
\end{align*}

\end{itemize}
\end{example}

\begin{notation}
We shall use the Sweedler's notation $\rho(x)=x_0\otimes x_1$. The five items of Definition \ref{defioverV} become
\begin{align*}
(x_0)_0\otimes (x_0)_1\otimes x_1&=x_0\otimes x_1'\otimes x_1'',\\
x_0\varepsilon(x_1)&=x,\\
(1_B)_0\otimes (1_B)_1&=1_B\otimes 1_V,\\
(xy)_0\otimes (xy)_1&=x_0 y_0\otimes x_1 y_1,\\
\varepsilon_\Delta(x_0)x_1&=\varepsilon_\Delta(x)1_V,\\
(x_0)^{(1)}\otimes (x_0)^{(2)}\otimes x_1&=(x^{(1)})_0\otimes (x^{(2)})_0\otimes (x^{(1)})_1 (x^{(2)})_1.
\end{align*}
\end{notation}

\subsection{Antipode}

\begin{prop}\label{prop1.2}
Let $(V,m_V,\delta_V)$ be a bialgebra and let $(B,m,\Delta,\rho)$ be a bialgebra over $V$.
If $(B,m,\Delta)$ is a Hopf algebra of antipode $S$, then $S$ is a comodule morphism:
\[\rho\circ S=(S\otimes \id_V)\circ \rho.\]
\end{prop}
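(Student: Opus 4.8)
The plan is to use a standard convolution-algebra argument, adapted to the comodule setting. Recall that $S$ is the inverse of $\id_B$ in the convolution algebra $\Hom(B,B)$ with product $f\star g=m\circ(f\otimes g)\circ\Delta$. I would show that both $\rho\circ S$ and $(S\otimes\id_V)\circ\rho$, viewed as elements of $\Hom(B,B\otimes V)$, are two-sided inverses (for a suitable convolution product on $\Hom(B,B\otimes V)$) of the \emph{same} element, namely $\rho=\rho\circ\id_B$; uniqueness of inverses in a monoid then forces them to coincide.

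More concretely, first I would equip $\Hom(B,B\otimes V)$ with the convolution product $f\ast g=(m\otimes m_V)\circ(\id_B\otimes c\otimes\id_V)\circ(f\otimes g)\circ\Delta$, whose unit is $e=(1_B\otimes 1_V)\varepsilon_\Delta=\rho\circ(1_B\varepsilon_\Delta)$. (This is exactly the structure for which $\rho$ being an algebra morphism and a coalgebra morphism up to $c$ matters; the displayed identities in the Notation block — in particular $(xy)_0\otimes(xy)_1=x_0y_0\otimes x_1y_1$ and the last one, the coproduct compatibility — are what make this a genuine associative unital product.) Then I would check the two convolution identities:
\[
\rho\ast(\rho\circ S)=e,\qquad ((S\otimes\id_V)\circ\rho)\ast\rho=e.
\]
For the first, $\rho\ast(\rho\circ S)=(m\otimes m_V)\circ(\id\otimes c\otimes\id)\circ(\rho\otimes(\rho\circ S))\circ\Delta$; using that $\rho$ is an algebra morphism (second/third items of Definition \ref{defioverV}) this rewrites as $\rho\circ(m\circ(\id\otimes S)\circ\Delta)=\rho\circ(1_B\varepsilon_\Delta)=e$, where the middle equality is the antipode axiom and the last is $\rho(1_B)=1_B\otimes 1_V$ together with the counit compatibility $\varepsilon_\Delta(x_0)x_1=\varepsilon_\Delta(x)1_V$. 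For the second identity, I would instead use that $\rho$ is a morphism of coalgebras in the twisted sense — the last item, $(\Delta\otimes\id)\circ\rho=m_{1,3,24}\circ(\rho\otimes\rho)\circ\Delta$ — to pull $\rho$ past $\Delta$ and again collapse via the antipode axiom. A short computation shows $((S\otimes\id_V)\circ\rho)\ast\rho=(S\otimes\id_V)\circ\bigl((m\otimes m_V)\circ(\id\otimes c\otimes\id)\circ(\rho\otimes\rho)\circ\Delta\bigr)$ restricted appropriately, which by the coproduct-compatibility identity equals $(S\otimes\id_V)\circ(\Delta\text{-collapse})=e$.

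Since $\Hom(B,B\otimes V)$ with $\ast$ is a monoid, an element having both a left inverse and a right inverse has them equal and unique; here $\rho\circ S$ is a right inverse of $\rho$ and $(S\otimes\id_V)\circ\rho$ is a left inverse of $\rho$, hence $\rho\circ S=(S\otimes\id_V)\circ\rho$, which is the claim. The main obstacle I anticipate is purely bookkeeping: verifying that $\ast$ is associative with unit $e$, and carefully tracking the flip $c$ and the order of tensor factors so that the algebra-morphism property of $\rho$ and the twisted coalgebra-morphism property combine correctly — there is no conceptual difficulty beyond the usual "an anti-automorphism is the convolution inverse of the identity" pattern, but one must be disciplined about which of the two one-sided computations uses which compatibility. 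An alternative, slightly slicker route that avoids introducing $\ast$ explicitly: note that $B\otimes V$ is itself a bialgebra (tensor product of $B$ and $V$) and that $\rho:B\to B\otimes V$ is then a bialgebra morphism by the five items; a bialgebra morphism between Hopf algebras automatically commutes with antipodes, and the antipode of $B\otimes V$ is $S\otimes S_V$ — but since $V$ need only be a bialgebra here, one restricts attention to the image and uses that $\rho$ lands in $B\otimes V$ where the relevant antipode computation only needs $S$ on the left factor; I would present the convolution argument as the clean self-contained version.
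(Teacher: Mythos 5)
Your proposal is correct and follows essentially the same route as the paper: equip $\Hom(B,B\otimes V)$ with the convolution product induced by $\Delta$ and the tensor-product algebra structure on $B\otimes V$, show $\rho\ast(\rho\circ S)$ equals the convolution unit using that $\rho$ is an algebra morphism, show $((S\otimes\id_V)\circ\rho)\ast\rho$ equals it using the coproduct compatibility $(\Delta\otimes\id)\circ\rho=m_{1,3,24}\circ(\rho\otimes\rho)\circ\Delta$, and conclude by associativity. The only remaining work is the routine verification you already flag (associativity, unit, and bookkeeping of the flip), which the paper carries out in exactly this way.
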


\begin{proof}
Let us give $\Hom(B,B\otimes V)$ its convolution product $*$: for any linear maps $f$, $g$ from $B$ to $B\otimes V$,
\[f*g=m_{B\otimes V}\circ (f\otimes g)\circ \Delta.\]
In this convolution algebra,
\begin{align*}
((S\otimes \id_V)\circ \rho) * \rho
&=m_{B\otimes V}\circ (S\otimes \id_V\otimes \id_B\otimes \id_V)\circ (\rho\otimes \rho)\circ \Delta\\
&=(m\circ (S\otimes \id_B)\circ \Delta \otimes \id_V)\circ m_{1,3,24}\circ (\rho\otimes \rho)\circ \Delta\\
&=(m\circ (S\otimes \id_B)\circ \Delta \otimes \id_V)\circ (\Delta \otimes \id)\circ \rho\\
&=(m\circ (S\otimes \id_B)\circ \Delta\otimes \id_V)\circ \rho\\
&=(\iota_B\circ \varepsilon_\Delta \otimes \id_V)\circ \rho\\
&=\iota_{B\otimes V}\circ \varepsilon_\Delta.
\end{align*}
So $(S\otimes \id_V)\circ \rho$ is a right inverse of $\rho$ in ($\Hom(B,B\otimes V),*)$.
\begin{align*}
\rho*(\rho\circ S)&=m_{B\otimes V}\circ (\rho \otimes \rho)\circ (\id \otimes S)\circ \Delta\\
&=\rho\circ m\circ (\id \otimes S)\circ \Delta\\
&=\rho \circ \iota_B\circ \varepsilon_\Delta\\
&=\iota_{B\otimes V}\circ \varepsilon_\Delta.
\end{align*}
So $\rho \circ S$ is a left inverse of  $\rho$ in ($\Hom(B,B\otimes V),*)$. As $*$ is associative,
 $(S\otimes \id_V)\circ \rho=\rho \circ S$.
\end{proof}

\begin{example}\begin{enumerate}
\item 
Let $(\Omega,\star)$ be a semigroup. If $V$ is the bialgebra of $(\Omega,\star)$,
we recover that if $B$ is an $\Omega$-graded bialgebra and a Hopf algebra, then, $S$ is $\Omega$-homogeneous, that is to say, for any $\alpha \in \Omega$,
\[S(B_\alpha)\subseteq B_\alpha.\]
\item Let $\Omega$ be a finite monoid.
If $(B,m,\Delta,\triangleleft)$ is a bialgebra over
$\K[\Omega]$ and a Hopf algebra, then 
for any $x\in B$, for any $\alpha \in \Omega$,
\[S(x\triangleleft \alpha)=S(x)\triangleleft \alpha.\]
\end{enumerate}
\end{example}

\subsection{Nonunitary cases}

We shall work with not necessarily  unitary bialgebras $(V,\cdot,\delta_V)$. If so, we put $uV=\K\oplus V$
and we give it a product and a coproduct defined as follows:
\begin{align*}
&\forall \lambda,\mu\in \K,&&\forall v,w\in V,&(\lambda+v)\cdot (\mu+w)&=\lambda\mu+\lambda w+\mu v+v\cdot w,\\
&\forall \lambda \in \K,&&\forall v\in V,&\delta_{uV}(\lambda+v)&=\lambda 1\otimes 1+\delta_V(v).
\end{align*}
Then $(uV,\cdot,\delta_{uV})$ is a counitary and unitary bialgebra, and $V$ is a nonunitary subbialgebra of $uV$. 

\begin{defi}\label{Defi1.7}
Let $(V,\cdot,\delta_V)$ be a not necessarily unitary bialgebra and $(uV,\cdot,\delta_{uV})$ be its unitary extension.
A bialgebra over $V$ is a bialgebra $(B,m,\Delta,\rho)$ over $uV$ such that
\begin{align*}
\rho(\ker(\varepsilon_\Delta))&\subseteq B\otimes V.
\end{align*}
\end{defi}

\begin{remark}
If $(B,m,\Delta,\rho)$  is a bialgebra over the nonunitary bialgebra $(V,\cdot,\delta_V)$, then
\[\{b\in B\mid \rho(b)=b\otimes 1\}=\K 1_B.\]
Indeed, if $\rho(b)=b\otimes 1$, putting $b'=b-\varepsilon_\Delta(b)1_B$, then $b'\in \ker(\varepsilon_\Delta)$. Hence,
\[\rho(b')=\rho(b)-\varepsilon_\Delta(b)1_B\otimes 1=(b-\varepsilon(b)1_B)\otimes 1\in B\otimes V,\]
so $b=\varepsilon_\Delta(b)1_B$.
\end{remark}

In the sequel, we will mention that we work with a nonunitary bialgebra $(V,\cdot,\delta_V)$ if we want to use Definition
\ref{Defi1.7} instead of Definition \ref{defioverV}, even if $(V,\cdot)$ has a unit -- that will happen when we will work with $\K$.

\begin{example}
\begin{enumerate}
\item If $\Omega$ is a semigroup, then  a bialgebra $(B,m,\Delta)$  over $\K\Omega$ is a connected $u\Omega$-graded bialgebra,
where $u\Omega=\{e\}\sqcup \Omega$ with the extension of the product of $\Omega$ such that $e$ is a unit:
\begin{align*}
&&B&=\bigoplus_{\alpha \in u\Omega}B_\alpha,\\
&\forall \alpha,\beta \in \Omega,&\Delta(B_\alpha)&\subseteq 
\sum_{\substack{\alpha',\alpha''\in \Omega,\\ \alpha'\times \alpha''=\alpha}}
B_{\alpha'}\otimes B_{\alpha''}+B_\alpha \otimes B_e+B_e\otimes B_\alpha,\\
&&B_e&=\K 1_B,\\
&\forall \alpha \in \Omega,&\varepsilon_\Delta(B_\alpha)&=(0).
\end{align*}
\item If $V=\K$\footnote{which is of course unitary, but which we treat as a nonunitary bialgebra, as mentioned before.}, 
as $u\K$ is isomorphic to $\K(\Z/2\Z,\times)$, 
any bialgebra $(B,m,\Delta)$ is a bialgebra over $V$
with $B_{\overline{0}}=\K 1_B$ and $B_{\overline{1}}=\ker(\varepsilon_\Delta)$. 
\end{enumerate}\end{example}

\subsection{Double bialgebras over $V$}

\begin{defi}\label{defi1.4}
Let $(B,m,\Delta,\delta)$ be a double bialgebra, $(V,\cdot,\delta_V)$ be a bialgebra and $\rho:B\longrightarrow B\otimes V$
be a right coaction of $V$ over $B$. We shall say that $(B,m,\Delta,\delta,\rho)$ is a double bialgebra over $V$
if $(B,m,\Delta,\rho)$ is a bialgebra over $V$ and
\[(\id \otimes c)\circ (\rho\otimes \id)\circ \delta=(\delta \otimes \id)\circ \rho:B\longrightarrow B\otimes B\otimes V,\]
where $c:V\otimes B\longrightarrow B\otimes V$ is the usual flip.
In other words, with Sweedler's notation $\delta(x)=x'\otimes x''$ for any $x\in B$,
\[(x')_0\otimes x''\otimes (x')_1=(x_0)'\otimes (x_0)''\otimes x_1.\]
\end{defi}

\begin{remark}
In other words, in a double bialgebra $B$ over $V$, considering the left coaction $\rho^{op}$ of $V^{cop}=(V,\delta_V^{op})$ on $B$, 
\[(\rho^{op}\otimes \id)\circ \delta=(\id \otimes \delta)\circ\rho^{op},\]
which means that $B$ is a $(V,\delta_V^{op})$--$(B,\delta)$-bicomodule.
\end{remark}

\begin{example}
Let $\Omega$ be a finite monoid. 
A double bialgebra $(B,m,\Delta,\triangleleft)$ over $\K[\Omega]$ is a bialgebra over $\K[\Omega]$ and a double bialgebra such that for any $x\in B$, for any $\alpha \in \Omega$,
\[\delta(x\triangleleft \alpha)=\delta(x)\triangleleft (\alpha \otimes e_\Omega),\]
where $e_\Omega$ is the unit of $\Omega$. 
\end{example}

In the nonunitary case:

\begin{defi}
Let $(V,\cdot,\delta_V)$ be a not necessarily unitary bialgebra.
A double bialgebra over $V$ is a double bialgebra $(B,m,\Delta,\delta,\rho)$ over $uV$ such that
$(B,m,\Delta,\rho)$ is a bialgebra over $V$.
\end{defi}

\begin{example}
\begin{enumerate}
\item Let $\Omega$ be a semigroup.
A double bialgebra $(B,m,\Delta,\delta)$ over $\K\Omega$ is a bialgebra over $\K\Omega$ such that for any $\alpha \in \Omega$,
\[\delta(B_\alpha)\subseteq B_\alpha \otimes B.\]
\item 
If $V=\K$, 
as $u\K$ is isomorphic to $\K(\Z/2\Z,\times)$, 
any double bialgebra $(B,m,\Delta,\delta)$ is a double bialgebra over $V$
with $B_{\overline{0}}=\K 1_B$ and $B_{\overline{1}}=\ker(\varepsilon_\Delta)$. 
\end{enumerate} \end{example}

\section{Quasishuffle bialgebras}

\subsection{Definition}

\cite{Ebrahimi-Fard2017-2,Foissy40,Hoffman2000,Hoffman2020} 
Let $(V,\cdot)$ be a nonunitary bialgebra. 
The tensor algebra $T(V)$ is given the quasishuffle product associated to $V$: For any $v_1,\ldots,v_{k+l}\in V$,
\[v_1\ldots v_k \squplus v_{k+1} \ldots v_{k+l}=\sum_{\sigma \in \QSh(k,l)} \left(\prod_{i\in\sigma^{-1}(1)}^\cdot  v_i\right)
\ldots \left(\prod_{i\in\sigma^{-1}(\max(\sigma))}^\cdot v_i\right),\]
where $\QSh(k,l)$ is the set of $(k,l$)-quasishuffles, that is to say surjections $\sigma:[k+l]\longrightarrow [\max(\sigma)]$
such that $\sigma(1)<\ldots <\sigma(k)$ and $\sigma(k+1)<\ldots <\sigma(k+l)$. 
The symbol $\displaystyle \prod^\cdot$ means that the corresponding products are taken in $(V,\cdot)$. 
The coproduct $\Delta$ is given by deconcatenation: for any $v_1,\ldots,v_n\in V$,
\[\Delta(v_1 \ldots  v_n)=\sum_{k=0}^n
v_1 \ldots  v_k\otimes v_{k+1} \ldots  v_n.\]
A special case is given when $\cdot$ is the zero product of $V$. In this case, we obtain the shuffle product $\shuffle$ of $T(V)$.
The bialgebra $(T(V),\shuffle,\Delta)$ is denoted by $\Sh(V)$. \\

If $(V,\cdot,\delta_V)$ is a not necessarily unitary commutative bialgebra, then $\qsh(V)$ inherits a second coproduct $\delta$
making it a double bialgebra. For any $v_1,\ldots,v_k\in V$, with Sweeder's notation $\delta_V(v)=v'\otimes v''$,
\[\delta(v_1 \ldots  v_n)=\sum_{1\leq i_1<\ldots<i_p<k}
\left(\prod_{1\leq i\leq i_1}^\cdot v'_i\right)\ldots  \left(\prod_{i_p+1\leq i\leq k}^\cdot v'_i\right)
\otimes (v''_1 \ldots v''_{i_1})\squplus \ldots \squplus (v''_{i_p+1} \ldots  v''_k).\]

\begin{prop}
Let $(V,\cdot,\delta_V)$ be a nonunitary bialgebra. We define a coaction of $V$ on $\qsh(V)$ by
\begin{align*}
&\forall v_1,\ldots,v_n \in V,&\rho(v_1\ldots v_n)&=v'_1\ldots v'_n \otimes v''_1\cdot \ldots \cdot v''_n.
\end{align*}
\begin{enumerate}
\item  The quasishuffle bialgebra $\qsh(V)=(T(V),\squplus,\Delta,\rho)$  is a bialgebra over $V$ if and only if
$(V,\cdot)$ is commutative. 
\item The quasishuffle double bialgebra $\qsh(V)=(T(V),\squplus,\Delta,\delta,\rho)$  is a bialgebra over $V$ if and only if
$(V,\cdot)$ is commutative and cocommutative.
\end{enumerate}
\end{prop}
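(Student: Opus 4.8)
The plan is to verify directly the five axioms of Definition \ref{defioverV} (or Definition \ref{Defi1.7} in the nonunitary case) for the data $(T(V),\squplus,\Delta,\rho)$, and then the extra compatibility of Definition \ref{defi1.4} for the second coproduct $\delta$, checking at each step exactly where the commutativity, resp. cocommutativity, of $(V,\cdot,\delta_V)$ is needed. First I would observe that $\rho$ is well-defined by its values on words because $T(V)$ is free as an algebra, and that the coaction axioms $(\rho\otimes\id_V)\circ\rho=(\id\otimes\delta_V)\circ\rho$ and $(\id\otimes\varepsilon_V)\circ\rho=\id$ follow word-by-word from the coassociativity and counitality of $\delta_V$ together with associativity of $\cdot$ (so that $v_1''\cdot\ldots\cdot v_n''$ is unambiguous). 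The counit condition $(\varepsilon_\Delta\otimes\id)\circ\rho=\varepsilon_\Delta(\cdot)1_V$ is immediate since $\varepsilon_\Delta$ kills all nonempty words and $\rho$ preserves the grading by length; in the nonunitary case one also notes $\rho(\ker\varepsilon_\Delta)\subseteq T(V)\otimes V$ because a nonempty word maps to a sum of terms each containing at least one tensor factor $v_i''\in V$ (using $\delta_V(V)\subseteq uV\otimes uV$ and that the product of the second legs lands in $V$ as soon as one factor does — here one must be a little careful and use that $V$ is an ideal of $uV$).

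The two interesting axioms are the compatibility of $\rho$ with the product $\squplus$ and with the coproduct $\Delta$. For the product axiom $\rho\circ\squplus=(\squplus\otimes\,\cdot)\circ(\id\otimes c\otimes\id)\circ(\rho\otimes\rho)$, I would argue by induction on total length using the inductive definition of $\squplus$ recalled in the introduction: the term $x(v\squplus yw)$, the term $y(xv\squplus w)$, and the contraction term $(x\cdot y)(v\squplus w)$ all have to reassemble, after applying $\rho$, into $\rho(xv)\squplus'$-style expressions whose second legs multiply up. This is where commutativity of $(V,\cdot)$ enters: the second leg of $\rho(xv\squplus yw)$ must be $x''\cdot v''\cdot y''\cdot w''$ regardless of the interleaving order produced by the quasishuffle, and matching the contraction term forces $(x''\cdot y'')=(x''\cdot y'')$ in the right order only if $\cdot$ is commutative; conversely, testing on $x\squplus y=xy+yx+x\cdot y$ with $\delta_V$ applied shows the identity fails for noncommutative $\cdot$, giving the "only if". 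For the coproduct axiom $(\Delta\otimes\id)\circ\rho=m_{1,3,24}\circ(\rho\otimes\rho)\circ\Delta$, deconcatenation of $v_1'\ldots v_n'$ at position $k$ matches, on the other side, deconcatenating $v_1\ldots v_n$ at $k$ and then applying $\rho$ to each piece — the second legs become $(v_1''\cdots v_k'')\cdot(v_{k+1}''\cdots v_n'')=v_1''\cdots v_n''$, which is automatic from associativity and needs nothing more; so part (1) is complete.

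For part (2), on top of everything in part (1) one must check the single identity $(\id\otimes c)\circ(\rho\otimes\id)\circ\delta=(\delta\otimes\id)\circ\rho$, i.e. in Sweedler form $(x')_0\otimes x''\otimes(x')_1=(x_0)'\otimes(x_0)''\otimes x_1$ on a word $x=v_1\ldots v_n$. Expanding both sides with the explicit formula for $\delta$ and the formula for $\rho$, the left side applies $\delta$ first (splitting $[n]$ at $i_1<\ldots<i_p$, taking $\cdot$-products of the $v'_i$ on the first leg and a $\squplus$-product of blocks of $v''_i$ on the second leg), then coacts on the first leg; the right side coacts first (producing $v_1^{(1)}\ldots v_n^{(1)}\otimes v_1^{(2)}\cdots v_n^{(2)}$, with a threefold Sweedler split of $\delta_V$) then applies $\delta$ to the first leg. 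Reconciling the two requires: (i) coassociativity of $\delta_V$ to match the iterated comultiplications, (ii) commutativity of $(V,\cdot)$ to reorder the $\cdot$-products of the "extracted" legs (already forced by part (1)), and crucially (iii) \emph{cocommutativity} of $\delta_V$ to interchange, on the $\squplus$-second-leg, the roles of the two remaining Sweedler legs after the flip $c$ has been applied — without it the word $v''_1\ldots$ appearing inside the quasishuffle blocks comes out with its tensor legs swapped relative to what $\rho$ on the first leg produces. I expect step (iii), i.e. pinning down exactly why cocommutativity is both sufficient and necessary here, to be the main obstacle: sufficiency is a careful but mechanical rewriting, while necessity should be extracted by evaluating on a two-letter word $v_1v_2$ (or even $v_1$, using that $\delta$ on a one-letter word already involves $\delta_V$) and comparing the two sides in the free situation, exhibiting a term that survives only when $\delta_V=\delta_V^{op}$. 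All other verifications are bookkeeping with Sweedler notation and the inductive definition of $\squplus$, and I would present them compactly rather than in full.
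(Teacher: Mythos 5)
Your plan is correct and follows essentially the same route as the paper: verify the axioms of a bialgebra over $V$ directly, locate the need for commutativity in the compatibility of $\rho$ with $\squplus$ (with necessity read off from $v\squplus w$ on single letters), and locate the need for cocommutativity in the identity $(\id\otimes c)\circ(\rho\otimes\id)\circ\delta=(\delta\otimes\id)\circ\rho$ (with necessity read off on a one-letter word). The only cosmetic difference is that the paper checks the product axiom term-by-term using the closed quasishuffle formula over $\QSh(k,l)$ rather than your proposed induction on length, which changes nothing of substance.
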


\begin{proof}
1. Let us assume that $\qsh(V)$ is a double bialgebra over $V$ with this coaction $\rho$. For any $v,w\in V$,
\begin{align*}
\rho(v\squplus w)&=\rho(vw+wv+v\cdot w)\\
&=v'w'\otimes v''\cdot w''+w'v'\otimes w''\cdot v''+v'\cdot w'\otimes v''\cdot w'',\\
(\squplus \otimes m)\circ (\rho\otimes \rho)(v\otimes w)
&=v'\squplus w'\otimes v''\cdot w''\\
&=(v'w'+w'v'+v'\otimes w')\otimes v''\cdot w''.
\end{align*}
As $\squplus$ is comodule morphism, we obtain that for any $v,w\in V$, 
\[w'\otimes v'\otimes w''\cdot v''=w'\otimes v'\otimes v''\cdot w''.\]
Applying $\epsilon_V\otimes \epsilon_V\otimes \id_V$, this gives $v\cdot w=w\cdot v$, so $V$ is commutative.\\

Let us now assume that $V$ is commutative. 
The compatibilities of the unit and of the counit with the coaction $\rho$ are obvious. 
Let $v_1,\ldots,v_{k+l}\in V$ and let $\sigma\in \QSh(k,l)$. 
\begin{align*}
&\rho\left( \left(\prod_{i\in\sigma^{-1}(1)}^\cdot  v_i\right)\ldots  
\left(\prod_{i\in\sigma^{-1}(\max(\sigma))}^\cdot v_i\right)\right)\\
&= \left(\prod_{i\in\sigma^{-1}(1)}^\cdot  v_i\right)'\ldots  
\left(\prod_{i\in\sigma^{-1}(\max(\sigma))}^\cdot v_i\right)'
\otimes \left(\prod_{i\in\sigma^{-1}(1)}^\cdot  v_i\right)''\cdot \ldots \cdot 
\left(\prod_{i\in\sigma^{-1}(\max(\sigma))}^\cdot v_i\right)''\\
&= \left(\prod_{i\in\sigma^{-1}(1)}^\cdot  v_i'\right)\ldots  
\left(\prod_{i\in\sigma^{-1}(\max(\sigma))}^\cdot v_i'\right)
\otimes \left(\prod_{i\in\sigma^{-1}(1)}^\cdot  v_i''\right)\cdot \ldots \cdot 
\left(\prod_{i\in\sigma^{-1}(\max(\sigma))}^\cdot v_i''\right)\\
&= \left(\prod_{i\in\sigma^{-1}(1)}^\cdot  v_i'\right)\ldots  
\left(\prod_{i\in\sigma^{-1}(\max(\sigma))}^\cdot v_i'\right)
\otimes v''_1\cdot \ldots \cdot v''_n,
\end{align*}
as $(V,\cdot)$ is commutative. Summing over all possible $\sigma$, we obtain
\begin{align*}
\rho(v_1\ldots v_k\squplus v_{k+1}\ldots v_{k+l})
&=\left(\sum_{\sigma \in \QSh(k,l)} \left(\prod_{i\in\sigma^{-1}(1)}^\cdot  v_i'\right)\ldots  
\left(\prod_{i\in\sigma^{-1}(\max(\sigma))}^\cdot v_i'\right)\right)\otimes v''_1\cdot \ldots \cdot v''_n\\
&=(v'_1\ldots v'_k\squplus v'_{k+1}\ldots v'_{k+l})\otimes (v''_1\cdot \ldots \cdot v''_k) \cdot (v''_{k+1}\cdot \ldots \cdot v''_{k+l})\\
&=\rho(v_1\ldots v_k)\rho(v_{k+1}\ldots v_{k+l}).
\end{align*}
Let $v_1,\ldots,v_k \in V$. If $0\leq i\leq k$,
\begin{align*}
m_{1,3,24}\circ (\rho \otimes \rho)(v_1\ldots v_i \otimes v_{i+1}\ldots v_k)
&=v'_1\ldots v'_i \otimes v'_{i+1}\ldots v'_n \otimes v''_1\cdot \ldots \cdot v''_k.
\end{align*}
Summing over all possible $i$, we obtain
\begin{align*}
m_{1,3,24}\circ (\rho \otimes \rho)\circ \Delta(v_1\ldots v_k)
&=\left(\sum_{i=0}^k v'_1\ldots v'_i \otimes v'_{i+1}\ldots v'_k\right)\otimes v''_1\cdot \ldots \cdot v''_k\\
&=(\Delta \otimes \id)\circ \rho(v_1\ldots v_k).
\end{align*}

2. Let us assume that $\qsh(V)$ is a double bialgebra over $V$. By the first part of this proof, $V$ is commutative. For any $v\in V$,
\begin{align*}
(\id \otimes \delta_V)\circ \delta_V(v)&=(\delta_V \otimes \id)\circ \delta_V(v)\\
&=(\delta \otimes \id)\circ \rho(v)\\
&=(\id \otimes c)\circ (\rho \otimes \id)\circ \delta(v)\\
&=(\id \otimes c)\circ (\delta \otimes \id)\circ \delta(v)\\
&=(\id \otimes \delta_V^{op})\circ \delta_V(v).
\end{align*}
Applying $\epsilon_V\otimes \id \otimes \id$, we obtain that $\delta_V^{op}=\delta_V$, so $V$ is cocommutative.\\

Let us assume that $V$ is commutative and cocommutative. It is proved in \cite{Foissy40} that 
$\qsh(V)$ is a double bialgebra.  By the first item, $\qsh(V)$ is a bialgebra over $V$. For any $v_1,\ldots,v_n\in V$,
\begin{align*}
&(\delta \otimes \id)\circ \rho(v_1\ldots v_k)\\
&=\sum_{1\leq i_1<\ldots<i_p<k}
\left(\prod_{1\leq i\leq i_1}^\cdot v'_i\right)\ldots  \left(\prod_{i_p+1\leq i\leq k}^\cdot v'_i\right)
\otimes (v''_1 \ldots v''_{i_1})\squplus \ldots \squplus (v''_{i_p+1} \ldots  v''_k)
\otimes v'''_1\cdot \ldots \cdot v'''_k,
\end{align*}
whereas
\begin{align*}
&(\id \otimes c)\circ (\rho \otimes \id)\circ \delta(v_1\ldots v_k)\\
&=\sum_{1\leq i_1<\ldots<i_p<k}
\left(\prod_{1\leq i\leq i_1}^\cdot v'_i\right)'\ldots  \left(\prod_{i_p+1\leq i\leq k}^\cdot v'_i\right)'
\otimes (v''_1 \ldots v''_{i_1})\squplus \ldots \squplus (v''_{i_p+1} \ldots  v''_k)\\
&\otimes \left(\prod_{1\leq i\leq i_1}^\cdot v'_i\right)''\cdot\ldots\cdot \left(\prod_{i_p+1\leq i\leq k}^\cdot v'_i\right)''\\
&=\sum_{1\leq i_1<\ldots<i_p<k}
\left(\prod_{1\leq i\leq i_1}^\cdot v'_i\right)\ldots  \left(\prod_{i_p+1\leq i\leq k}^\cdot v'_i\right)
\otimes (v''_1 \ldots v''_{i_1})\squplus \ldots \squplus (v'''_{i_p+1} \ldots  v'''_k)\\
&\otimes \left(\prod_{1\leq i\leq i_1}^\cdot v''_i\right)\cdot\ldots\cdot \left(\prod_{i_p+1\leq i\leq k}^\cdot v''_i\right)\\
&=\sum_{1\leq i_1<\ldots<i_p<k}
\left(\prod_{1\leq i\leq i_1}^\cdot v'_i\right)\ldots  \left(\prod_{i_p+1\leq i\leq k}^\cdot v'_i\right)
\otimes (v'''_1 \ldots v'''_{i_1})\squplus \ldots \squplus (v'''_{i_p+1} \ldots  v'''_k)
\otimes v''_1\cdot \ldots \cdot v''_k,
\end{align*}
as $V$ is commutative. By the cocommutativity of $\delta_V$,
\[(\delta \otimes \id)\circ \rho(v_1\ldots v_k)=(\id \otimes c)\circ (\rho \otimes \id)\circ \delta(v_1\ldots v_k),\]
so $(T(V),\squplus, \Delta,\delta,\rho)$ is a double bialgebra over $V$.  
\end{proof}

\subsection{Universal property of quasishuffle bialgebras}

Let us recall the definition of connectivity for bialgebras:

\begin{notation}\begin{enumerate}
\item Let $(B,m,\Delta)$ be a bialgebra, of unit $1_B$ and of counit $\varepsilon_\Delta$. For any $x\in \ker(\varepsilon_\Delta)$,
we put
\[\tdelta(x)=\Delta(x)-x\otimes 1-1\otimes x.\] 
%This map is extended as a map from $B$ to $\ker(\varepsilon_\Delta)^{\otimes 2}$ by $\tdelta(1_B)=0$. 
Then $\tdelta$ is a coassociative coproduct on $\ker(\varepsilon_\Delta)$.
Its iterations will be denoted by $\tdelta^{(n)}:\ker(\varepsilon_\Delta)\longrightarrow
\ker(\varepsilon_\Delta)^{\otimes(n+1)}$, inductively defined by
\[\tdelta^{(n)}=\begin{cases}
\id_{\ker(\varepsilon_\Delta)}\mbox{ if }n=0,\\
(\tdelta^{(n-1)}\otimes \id)\circ \tdelta\mbox{ otherwise}.
\end{cases}\]
\item The bialgebra $(B,m,\Delta)$ is connected if
\begin{align*}
\ker(\varepsilon_\Delta)&=\bigcup_{n=0}^\infty \ker\left(\tdelta^{(n)}\right).
\end{align*}
\item If $(B,m,\Delta)$ is a connected bialgebra, we put, for $n\geq 0$,
\[B_{\leq n}=\K 1_B\oplus \ker\left(\tdelta^{(n)}\right).\]
As $B$ is a connected, this is a filtration of $B$, known as the coradical filtration \cite{Abe1980,Sweedler1969}.
 Moreover, for any $n\geqslant 1$, because of  the coassociativity of $\tdelta$,
\[\tdelta(B_{\leq n})\subseteq B_{\leq n-1}^{\otimes 2}.\]
\end{enumerate}\end{notation}

In the case of bialgebras over a bialgebra $(V,\cdot,\delta_V)$, the connectedness is sometimes automatic:

\begin{prop}\label{prop6.9}
Let $(V,\cdot,\Delta)$ be a nonunitary  bialgebra. For any $n\geqslant 1$, we put
\[V^{\cdot n}=\vect(v_1\cdot \ldots \cdot v_n,\: v_1,\ldots,v_n \in V).\]
If $\displaystyle \bigcap_{n\geqslant 1} V^{\cdot n}=(0)$, then any bialgebra over $V$ is a connected bialgebra.
\end{prop}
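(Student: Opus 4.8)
The plan is to use the compatibility of the coproduct $\Delta$ of $B$ with the coaction $\rho$ in order to control the iterated reduced coproduct $\tdelta^{(n)}$ by the spaces $V^{\cdot n}$, and then to feed in the hypothesis $\bigcap_n V^{\cdot n}=(0)$ through a finite-dimensionality argument.

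First I would record two preliminary facts. As $B$ is a bialgebra over $uV$ with $\rho(\ker(\varepsilon_\Delta))\subseteq B\otimes V$, applying $\varepsilon_\Delta\otimes\id$ to $\rho(x)$ for $x\in\ker(\varepsilon_\Delta)$, where $(\varepsilon_\Delta\otimes\id)\circ\rho(x)=0$ by the comodule-morphism property of $\varepsilon_\Delta$, shows that in fact $\rho(\ker(\varepsilon_\Delta))\subseteq\ker(\varepsilon_\Delta)\otimes V$; this is what will let $\tdelta^{(n)}$ be applied to the left tensor factor of $\rho(x)$. Moreover, a product of $n+1$ elements of $V$ may be bracketed as a product of $n$ elements of $V$, so $V^{\cdot(n+1)}\subseteq V^{\cdot n}$ for $n\geq 1$; thus $(V^{\cdot n})_{n\geq 1}$ is a decreasing chain of subspaces of $V$, and the hypothesis reads $\bigcap_{n\geq 1}V^{\cdot n}=(0)$.

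The heart of the argument is the following claim: for every $x\in\ker(\varepsilon_\Delta)$ and every $n\geq 1$,
\[(\tdelta^{(n)}\otimes\id_V)\circ\rho(x)\in B^{\otimes(n+1)}\otimes V^{\cdot(n+1)}.\]
For $n=1$ I would get this from the last axiom of a bialgebra over $V$. Writing $\rho(x)=\sum_i b_i\otimes v_i$ with $b_i\in\ker(\varepsilon_\Delta)$ and $v_i\in V$, so that $\Delta(b_i)=b_i\otimes 1_B+1_B\otimes b_i+\tdelta(b_i)$, the identity $(\Delta\otimes\id_V)\circ\rho=m_{1,3,24}\circ(\rho\otimes\rho)\circ\Delta$ evaluated at $x$, together with $\rho(1_B)=1_B\otimes 1_{uV}$, makes the terms involving $1_B$ cancel on both sides and leaves
\[(\tdelta\otimes\id_V)\circ\rho(x)=m_{1,3,24}\circ(\rho\otimes\rho)\circ\tdelta(x);\]
by the definition of $m_{1,3,24}$ the last tensor slot of the right-hand side is a product of two elements of $V$, hence lies in $V\cdot V=V^{\cdot 2}$. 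The general $n$ then follows by induction, applying the $n=1$ identity once and the induction hypothesis once and keeping track of the tensor slots; concretely $(\tdelta^{(n)}\otimes\id_V)\circ\rho(x)$ is a sum of terms $a_1\otimes\cdots\otimes a_{n+1}\otimes u$ with $u$ a product of $n+1$ elements of $V$. I expect this bookkeeping to be the only slightly technical point.

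To conclude, fix $x\in\ker(\varepsilon_\Delta)$ and write $\rho(x)=\sum_{i=1}^r b_i\otimes v_i$ with $v_1,\dots,v_r\in V$ linearly independent and $b_i\in\ker(\varepsilon_\Delta)$; put $U=\vect(v_1,\dots,v_r)$, a finite-dimensional subspace of $V$. The counit axiom gives $x=\sum_i\epsilon_V(v_i)b_i$, hence $\tdelta^{(n)}(x)=\sum_i\epsilon_V(v_i)\tdelta^{(n)}(b_i)$ for every $n$. Suppose now that $\tdelta^{(n)}(x)\neq 0$ for some $n\geq 1$, and choose a linear form $\phi$ on $B^{\otimes(n+1)}$ with $\phi(\tdelta^{(n)}(x))\neq 0$. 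Applying $\phi\otimes\id_V$ to the claim, written as $\sum_i\tdelta^{(n)}(b_i)\otimes v_i\in B^{\otimes(n+1)}\otimes V^{\cdot(n+1)}$, produces $w:=\sum_i\phi(\tdelta^{(n)}(b_i))\,v_i\in V^{\cdot(n+1)}$; this $w$ lies in $U$, and $\epsilon_V(w)=\sum_i\epsilon_V(v_i)\phi(\tdelta^{(n)}(b_i))=\phi(\tdelta^{(n)}(x))\neq 0$, so $w\neq 0$. Hence $U\cap V^{\cdot(n+1)}\neq(0)$ whenever $\tdelta^{(n)}(x)\neq 0$. If $x\notin\bigcup_{n\geq 0}\ker(\tdelta^{(n)})$, this would hold for every $n\geq 1$, so $U\cap V^{\cdot m}\neq(0)$ for all $m\geq 2$; but $(U\cap V^{\cdot m})_{m\geq 2}$ is a decreasing chain of subspaces of the finite-dimensional space $U$, hence stationary, with stationary value $U\cap\bigcap_{m\geq 2}V^{\cdot m}=U\cap\bigcap_{m\geq 1}V^{\cdot m}=(0)$, a contradiction. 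Therefore $\ker(\varepsilon_\Delta)=\bigcup_{n\geq 0}\ker(\tdelta^{(n)})$, that is, $B$ is a connected bialgebra.
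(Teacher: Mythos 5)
Your proof is correct and takes essentially the same route as the paper's: both rest on the containment $(\tdelta^{(n)}\otimes\id)\circ\rho(x)\in B^{\otimes(n+1)}\otimes V^{\cdot(n+1)}$ (which the paper obtains as the iterated compatibility identity $m_{1,3,\ldots,2N-1,24\ldots 2N}\circ\rho^{\otimes N}\circ\tdelta^{(N-1)}(x)=(\tdelta^{(N-1)}\otimes\id)\circ\rho(x)$ rather than by your induction), on the stationarity of the decreasing chain $W\cap V^{\cdot m}$ inside the finite-dimensional span $W$ of the $V$-legs of $\rho(x)$, and on the counit $\epsilon_V$ to recover $\tdelta^{(n)}(x)$ from the last tensor slot. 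The only difference is presentational: the paper concludes directly that $(\tdelta^{(N-1)}\otimes\id)\circ\rho(x)=0$ since it lies in $B^{\otimes N}\otimes(W\cap V^{\cdot N})=(0)$, whereas you reach the same point by contradiction with a separating linear functional.
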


\begin{proof}
Let $(B,m,\Delta,\rho)$ be a bialgebra over $V$ and let $x\in \ker(\varepsilon_\Delta)$. 
We put
\[\rho(x)=\sum_{i=1}^p x_i\otimes v_i.\]
Let us denote by $W$ the vector space generated by the elements $v_i$. By definition, this is a finite-dimensional vector space
and $\rho(x)\in B\otimes W$. As $W$ is finite-dimensional, the decreasing sequence of vector spaces
$(W\cap V^{\cdot n})_{n\geqslant 1}$ is stationary, so there exists $N\geqslant 1$ such that if $n\geqslant N$,
$W\cap V^{\cdot n}=W\cap V^{\cdot N}$. Therefore
\[W\cap V^{\cdot N}=W\cap \bigcap_{n\geqslant 1}V^{\cdot n}=(0).\]
Moreover,
\begin{align*}
\underbrace{m_{1,3,\ldots,2N-1,24\ldots 2N}\circ \rho^{\otimes N}\circ \tdelta^{(N-1)}(x)}_{\in  B^{\otimes N}\otimes V^{\cdot N}}
&=\underbrace{(\tdelta^{(N-1)} \otimes \id)\circ \rho(x)}_{\in B^{\otimes N}\otimes W}.
\end{align*}
As $V^{\cdot N}\cap W=(0)$, $(\tdelta^{(N-1)} \otimes \id)\circ \rho(x)=0$.
Then
\begin{align*}
(\id^{\otimes N} \otimes \epsilon_V)\circ (\tdelta^{(N-1)} \otimes \id)\circ \rho(x)&=\tdelta^{(N-1)}(x)=0.
\end{align*}
So $(B,m,\Delta)$ is connected.
\end{proof}

\begin{example}
\begin{enumerate}
\item  If $(V,\cdot,\delta_V)$ is the bialgebra of the semigroup $(\N_{>0},+)$, 
then $\displaystyle \bigcap_{n\geqslant 1} V^{\cdot n}=(0)$.
We recover the classical result that any $\N$-graded bialgebra $B$ such that $B_0=\K 1_B$ is connected. 
This also works for algebras of semigroups $\N^n\setminus\{0\}$, for example. 
\item This does not hold if $V$ is unitary, as then $V^{\cdot n}=V$ for any $n\in \N$. 
\end{enumerate}\end{example}

\begin{theo} \label{theomorphismes}
Let $V$ be a nonunitary, commutative  bialgebra and let $(B,m,\Delta,\rho)$ be a connected bialgebra over $V$.
For any character $\lambda$ of $B$, there exists a unique morphism $\phi$ from $(B,m,\Delta,\rho)$ to 
$(T(V),\squplus,\Delta,\rho)$ of bialgebras over $V$ such that $\epsilon_\delta\circ \phi=\lambda$.
Moreover, for any $x\in \ker(\varepsilon_\Delta)$,
\begin{align}
\label{eqphisurV}
\phi(x)&=\sum_{n=1}^\infty \underbrace{((\lambda \otimes \id)\circ \rho)^{\otimes n}\circ \tdelta^{(n-1)}(x)}
_{\in V^{\otimes n}}.
\end{align}
\end{theo}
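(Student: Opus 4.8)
The plan is to take the right-hand side of (\ref{eqphisurV}) as the \emph{definition} of $\phi$ on $\ker(\varepsilon_\Delta)$, to set $\phi(1_B)=1$, and then to check, in turn, that $\phi$ is a morphism of coalgebras, of algebras, and of comodules, that $\varepsilon_\delta\circ\phi=\lambda$, and finally that $\phi$ is unique. Throughout I write $g=(\lambda\otimes\id_{uV})\circ\rho:B\to uV$; since $\lambda$ is a character and $\rho$ is an algebra morphism, $g$ is a unital algebra morphism, and $g(\ker(\varepsilon_\Delta))\subseteq V$ because $\rho(\ker(\varepsilon_\Delta))\subseteq B\otimes V$ (Definition \ref{Defi1.7}). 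Since $B$ is connected, for each $x\in\ker(\varepsilon_\Delta)$ one has $\tdelta^{(n)}(x)=0$ for $n$ large, so the sum in (\ref{eqphisurV}) is finite and $\phi:B\to T(V)$ is well defined, with $\varepsilon_{T(V)}\circ\phi=\varepsilon_\Delta$ and $p_1\circ\phi=\tilde f:=g-1_{uV}\varepsilon_\Delta$, where $p_1:T(V)\to V$ is the projection onto words of length $1$.

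Next I would note that (\ref{eqphisurV}) is precisely the morphism of conilpotent coalgebras $(\ker(\varepsilon_\Delta),\tdelta)\to(\overline{T(V)},\tdelta)$ lifting $g|_{\ker(\varepsilon_\Delta)}$ through the universal property of the cofree conilpotent coalgebra on $V$ (equivalently, $\Delta\circ\phi=(\phi\otimes\phi)\circ\Delta$ follows by a direct induction using coassociativity of $\tdelta$); so $\phi$ is a morphism of coalgebras, and moreover any morphism of coalgebras from a connected coalgebra to $T(V)$ is determined by its composite with $p_1$. For the algebra property, $\phi\circ m$ and $\squplus\circ(\phi\otimes\phi)$ are both morphisms of coalgebras $B\otimes B\to T(V)$ ($m$, $\squplus$ and $\phi\otimes\phi$ being such), and $B\otimes B$ is connected, so it suffices to compare their composites with $p_1$. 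A short computation gives $p_1(v\squplus w)=\varepsilon_{T(V)}(v)\,p_1(w)+p_1(v)\,\varepsilon_{T(V)}(w)+p_1(v)\cdot p_1(w)$; hence both composites send $a\otimes b$ to $\tilde f(ab)=g(ab)-\varepsilon_\Delta(ab)1_{uV}=\tilde f(a)\,\tilde f(b)+\varepsilon_\Delta(a)\tilde f(b)+\varepsilon_\Delta(b)\tilde f(a)$, using that $g$ is a unital algebra morphism and that the product of $\tilde f(a)$ and $\tilde f(b)$ in $uV$ coincides with their product $\cdot$ in $V$.

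The main obstacle is the comodule property $\rho\circ\phi=(\phi\otimes\id_{uV})\circ\rho$; denote the two sides by $\alpha$ and $\beta$. I would prove $\alpha=\beta$ on $\ker(\tdelta^{(n)})$ by induction on $n$ (for $n=0$ and on $1_B$, both sides equal $1\otimes 1_{uV}$). Feeding Step~1 into the coproduct--coaction compatibility (the last item of Definition \ref{defioverV}), valid both in $B$ and, by the preceding Proposition, in $\qsh(V)$, one obtains $(\Delta\otimes\id_{uV})\circ\alpha=m_{1,3,24}\circ(\alpha\otimes\alpha)\circ\Delta$ and the same formula for $\beta$; subtracting these and using the induction hypothesis on $\tdelta(x)\in(\ker(\tdelta^{(n-1)}))^{\otimes2}$ shows that $z:=\alpha(x)-\beta(x)$ satisfies $(\Delta\otimes\id_{uV})(z)=z^{[1]}\otimes1\otimes z^{[2]}+1\otimes z^{[1]}\otimes z^{[2]}$ (Sweedler notation on the $T(V)$-side). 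As $\tdelta$ is injective on $V^{\otimes m}$ for $m\geq2$ and the length-$0$ component is forced to vanish, $z$ is concentrated in length $1$, and in fact $z\in V\otimes V$ since $\alpha(x),\beta(x)\in T(V)\otimes V$ for $x\in\ker(\varepsilon_\Delta)$; then $(p_1\otimes\id_V)(z)=z$, and a direct computation using $(p_1\otimes\id_{uV})\circ\rho_{\qsh(V)}=\delta_{uV}\circ p_1$, the coaction axiom, and the fifth item of Definition \ref{defioverV} gives $(p_1\otimes\id_V)\alpha(x)=(g\otimes\id_{uV})\circ\rho(x)=(p_1\otimes\id_V)\beta(x)$, whence $z=0$. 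The equality $\varepsilon_\delta\circ\phi=\lambda$ is then immediate from $p_1\circ\phi=\tilde f$ and the shape of $\varepsilon_\delta$. The difficulty here is purely the bookkeeping of the unit $1_{uV}\notin V$ and of the length-$0$ and length-$1$ components of $T(V)$.

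For uniqueness, let $\psi:B\to T(V)$ be a morphism of bialgebras over $V$ with $\varepsilon_\delta\circ\psi=\lambda$. Being a morphism of coalgebras out of a connected coalgebra, $\psi$ is determined by $p_1\circ\psi$, so it is enough to show $p_1\circ\psi=\tilde f=p_1\circ\phi$. Applying $p_1\otimes\id_{uV}$ to $\rho\circ\psi=(\psi\otimes\id_{uV})\circ\rho$ and using $(p_1\otimes\id_{uV})\circ\rho_{\qsh(V)}=\delta_{uV}\circ p_1$ gives $\delta_{uV}\circ(p_1\circ\psi)=((p_1\circ\psi)\otimes\id_{uV})\circ\rho$; composing with $\varepsilon_{uV}\otimes\id_{uV}$ yields $p_1\circ\psi=((\varepsilon_{uV}\circ p_1\circ\psi)\otimes\id_{uV})\circ\rho$. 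Since $\varepsilon_\delta\circ\psi=\lambda$ and $\varepsilon_\delta$ has the stated form, $\varepsilon_{uV}\circ p_1\circ\psi=\lambda-\varepsilon_\Delta$, and so $p_1\circ\psi=((\lambda-\varepsilon_\Delta)\otimes\id_{uV})\circ\rho=g-1_{uV}\varepsilon_\Delta=\tilde f$ by the fifth item of Definition \ref{defioverV}. Hence $\psi=\phi$.
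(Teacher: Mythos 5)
Your proposal is correct, and its skeleton is the same as the paper's: define $\phi$ by formula (\ref{eqphisurV}) together with $\phi(1_B)=1$ (convergence by connectedness), verify the coalgebra, comodule and $\epsilon_\delta$ properties, prove uniqueness, and then obtain multiplicativity by comparing $\phi\circ m$ with $\squplus\circ(\phi\otimes\phi)$ as two morphisms out of the connected bialgebra $B\otimes B$. Where you genuinely diverge is in how the verifications are organized: you systematically exploit the cofreeness of the deconcatenation coalgebra, reducing every comparison of coalgebra morphisms into $T(V)$ to their length-one components $p_1\circ(-)$, and you identify $p_1\circ\phi$ with $\tilde f=(\lambda\otimes\id)\circ\rho-1_{uV}\varepsilon_\Delta$. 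The paper instead proves $\rho\circ\phi=(\phi\otimes\id)\circ\rho$ by a one-string Sweedler computation directly from the definition of $\rho$ on $T(V)$, and runs uniqueness as an induction on the coradical filtration in which the difference $\psi(x)-\phi(x)\in V$ is killed using $\epsilon_\delta$ and the comodule property; its multiplicativity step then invokes that uniqueness statement (so it needs $\phi_1,\phi_2$ to be comodule morphisms), whereas your $p_1$-comparison needs only the coalgebra structure plus the identity $p_1(v\squplus w)=\varepsilon(v)p_1(w)+p_1(v)\varepsilon(w)+p_1(v)\cdot p_1(w)$, which you verify correctly. The trade-off is that your comodule-morphism step becomes a filtration induction (longer than the paper's direct computation, and requiring the coproduct--coaction compatibility on both sides), while your uniqueness and multiplicativity arguments are cleaner and make visible exactly which axioms are used where; your bookkeeping of $uV$ versus $V$ and of the length-$0$ and length-$1$ components is consistent throughout.
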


\begin{proof}
Let us first prove that for any $\lambda \in V^*$ such that $\lambda(1_B)=1$, there exists a unique coalgebra morphism
$\phi:(B,\Delta,\rho)\longrightarrow (T(V),\Delta,\rho)$ of coalgebras over $V$ such that $\epsilon_\delta\circ \phi=\lambda$. \\

\textit{Existence}. Let $\phi:B\longrightarrow \qsh(V)$ defined by (\ref{eqphisurV}) and by $\phi(1_B)=1$.
By connectivity of $B$,  (\ref{eqphisurV}) makes perfectly sense. Let us prove that $\phi$ is a coalgebra morphism. 
As $\phi(1_B)=1$, it is enough to prove that for any $x\in \ker(\varepsilon_\Delta)$,
$\tdelta\circ \phi(x)=(\phi\otimes \phi)\circ \tdelta(x)$.
We shall use Sweedler's notation $\tdelta^{(n-1)}(x)=x^{(1)}\otimes \ldots \otimes x^{(n)}$.
\begin{align*}
&\tdelta\circ \phi(x)\\
&=\sum_{n=1}^\infty\lambda\left(x^{(1)}_0\right)\ldots \lambda\left(x^{(n)}_0\right)
\tdelta\left(x^{(1)}_1\ldots x^{(n)}_1\right)\\
&=\sum_{n=1}^\infty \sum_{i=1}^{n-1}\lambda\left(x^{(1)}_0\right)\ldots \lambda\left(x^{(n)}_0\right)
x^{(1)}_1\ldots x^{(i)}_1\otimes x^{(i+1)}_1\ldots x^{(n)}_1\\
&=\sum_{i,j\geqslant 1} \lambda\left(x^{(1)(1)}_0\right)\ldots \lambda\left(x^{(1)(i)}_0\right)
\lambda\left(x^{(2)(1)}_0\right)\ldots \lambda\left(x^{(2)(j)}_0\right)x^{(1)(1)}_1\ldots x^{(1)(i)}_1
\otimes x^{(2)(1)}_1\ldots x^{(2)(j)}_1\\
&=(\phi\otimes \phi)\left(x^{(1)}\otimes x^{(2)}\right)\\
&=(\phi\otimes \phi)\circ \tdelta(x).
\end{align*}
Let us prove that $\epsilon_\delta\circ \phi=\lambda$. If $x=1_B$, then $\epsilon_\delta\circ \phi(1_B)=\epsilon_\delta(1)=1=\lambda(1_B)$.
If $x\in \ker(\varepsilon_\Delta)$, as $\epsilon_\delta(V^{\otimes n})=(0)$ for any $n\geq 2$,
\[\epsilon_\delta\circ \phi(x)=\epsilon_\delta\circ (\lambda \otimes \id)\circ \rho\circ \tdelta^{(0)}(x)+0
=\lambda\left(( \id \otimes \epsilon_\delta)\circ \rho (x)\right)=\lambda(x).\]
Let us prove that $\phi$ is a comodule morphism. If $x=1_B$, then 
\[\rho\circ \phi(1_B)=1\otimes 1=(\phi\otimes \id)(1_B\otimes 1)=(\phi\otimes \id)\circ \rho(1_B).\]
Let us assume that $x\in \ker(\varepsilon_\Delta)$. 
\begin{align*}
(\phi\otimes \id) \circ \rho(x)&=\phi(x_0)\otimes x_1\\
&=\sum_{n=1}^\infty \lambda \left((x_0)^{(1)}_0\right)\ldots 
\lambda \left((x_0)^{(n)}_0\right) (x_0)^{(1)}_1\ldots (x_0)^{(n)}_1 \otimes x_1\\
&=\sum_{n=1}^\infty\lambda\left(x^{(1)}_{00}\right)\ldots \lambda\left(x^{(n)}_{00}\right)
x^{(1)}_{01}\ldots x^{(n)}_{01}\otimes x^{(1)}_1\cdot \ldots x^{(n)}_1\\
&=\sum_{n=1}^\infty\lambda\left(x^{(1)}_0\right)\ldots \lambda\left(x^{(n)}_0\right)
x^{(1)}_1\ldots x^{(n)}_1\otimes x^{(1)}_2\cdot \ldots\cdot  x^{(n)}_2\\
&=\sum_{n=1}^\infty\lambda\left(x^{(1)}_0\right)\ldots \lambda\left(x^{(n)}_0\right) \rho\left(x^{(1)}_1\ldots x^{(n)}_1\right)\\
&=\rho\circ \phi(x).
\end{align*}

\textit{Uniqueness}. Let $\psi:(B,\Delta,\rho)\longrightarrow (T(V),\Delta,\rho)$ such that $\epsilon_\delta \circ \psi=\lambda$.
As $1$ is the unique group-like element of $\qsh(V)$, necessarily $\psi(1_B)=1=\phi(1_B)$.
It is now enough to prove that $\psi(x)=\phi(x)$ for any $x\in \ker(\varepsilon_\Delta)$.
We assume that $x\in B_{\leq n}$ and we proceed by induction on $n$. If $n=0$, there is nothing to prove.
Let us assume that $n\geq 1$. As $\tdelta(x)\in B_{\leq n-1}^{\otimes 2}$, by the induction hypothesis,
\[\tdelta \circ \psi(x)=(\psi \otimes \psi)\circ \tdelta(x)=(\phi \otimes \phi)\circ \tdelta(x)
=\tdelta \circ \phi(x),\]
so $\psi(x)-\phi(x)\in \ker(\tdelta)=V$. We put $\psi(x)-\phi(x)=v\in V$. Then
\begin{align*}
v&=(\epsilon_V\otimes \id)\circ \delta_V(v)\\
&=(\epsilon_\delta\otimes \id)\circ \rho(v)\\
&=(\epsilon_\delta\otimes \id) \circ \rho\circ \phi(x)-(\epsilon_\delta\otimes \id) \circ \rho\circ \psi(x)\\
&=(\epsilon_\delta\otimes \id) \circ (\phi\otimes \id)(x)-(\epsilon_\delta\otimes \id) \circ (\psi\otimes \id)(x)\\
&=(\lambda \otimes \id)(x)-(\lambda \otimes \id)(x)\\
&=0.
\end{align*}
So $\psi(x)=\phi(x)$.\\

Let us now consider a character $\lambda$. As $\lambda(1_B)=1$, we already proved that there exists a unique coalgebra morphism 
$\phi:(B,\Delta,\rho)\longrightarrow (T(V),\Delta,\rho)$ such that $\epsilon_\delta\circ \phi=\lambda$. 
Let us prove that it is an algebra morphism.
We consider the two morphisms $\phi_1=\squplus \circ (\phi\otimes \phi)$  and $\phi_2:\phi\circ m$, both
from $B\otimes B$ to $\qsh(V)$. As $\phi$, $\squplus$ and $m$ are both comodule and coalgebra morphisms,
$\phi_1$ and $\phi_2$ are comodule and coalgebra morphisms. Moreover, $B\otimes B$ is connected and,
as $\epsilon_\delta$ is a character of $(T(V),\squplus)$ and $\lambda$ is a character of $(B,m)$,
\begin{align*}
\epsilon_\delta\circ \squplus \circ (\phi\otimes \phi)&=(\epsilon_\delta\otimes \epsilon_\delta)\circ (\phi\otimes \phi)
=\lambda \otimes \lambda
=\lambda \otimes m
=\epsilon_\delta\circ \phi\circ m.
\end{align*}
So $\epsilon_\delta \circ \phi_1=\epsilon_\delta\circ \phi_2$. By the \emph{uniqueness} part, $\phi_1=\phi_2$.  \end{proof}

\begin{lemma}\label{lemmeKX}
\begin{enumerate}
\item The double bialgebras $\qsh(\K)=(T(\K),\squplus,\Delta,\delta)$ and $(\K[X],m,\Delta,\delta)$ are isomorphic,
through the map
\[\morH:\left\{\begin{array}{rcl}
\qsh(\K)&\longrightarrow&\K[X]\\
\lambda_1\ldots \lambda_n&\longrightarrow&\lambda_1\ldots \lambda_n H_n(X),
\end{array}\right.\]
where $H_n$ is the $n$-th Hilbert polynomial
\[H_n(X)=\frac{X(X-1)\ldots (X-n+1)}{n!}.\]
\item Let $V$ be a  nonunitary, commutative and cocommutative bialgebra.
The following map is a morphism of double bialgebras:
\[\morH_V:\left\{\begin{array}{rcl}
\qsh(V)&\longrightarrow&\K[X]\\
v_1\ldots v_n&\longrightarrow&\epsilon_V(v_1)\ldots \epsilon(v_n) H_n(X).
\end{array}\right.\]
\end{enumerate}
\end{lemma}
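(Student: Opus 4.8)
The plan is to identify $\morH$ with a concrete algebra isomorphism and then to check the two coproducts separately. Write $w_n\in T(\K)$ for the word of length $n$ all of whose letters equal $1\in\K$, so that $(w_n)_{n\geq 0}$ is a basis of $\qsh(\K)$ and $\morH(w_n)=H_n(X)=\binom{X}{n}$; in particular $\morH$ is a linear isomorphism, since $(H_n)_{n\geq 0}$ is a basis of $\K[X]$. Since the product of $(\K,\cdot)$ is commutative, $\qsh(\K)$ is a commutative algebra, and an easy induction from the inductive definition of $\squplus$ gives $w_1^{\squplus n}=\sum_{m=1}^n S(n,m)\,m!\,w_m$, where $S(n,m)$ is the Stirling number of the second kind. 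As $S(n,n)=1$ this change of basis is unitriangular, so $(w_1^{\squplus n})_{n\geq 0}$ is also a basis and $\qsh(\K)=\K[w_1]$ is a polynomial algebra in one generator. The classical identity $X^n=\sum_{m=1}^n S(n,m)\,m!\,H_m(X)$ then says that $\morH$ coincides, on the basis $(w_1^{\squplus n})_{n\geq0}$, with the algebra morphism $\K[w_1]\to\K[X]$ sending $w_1$ to $X$; hence $\morH$ is an algebra isomorphism.

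For the coproducts I would compute in $\K[X][[t]]$ with the generating series $\sum_{n\geq 0}H_n(X)\,t^n=(1+t)^X:=\sum_{n\geq 0}\binom{X}{n}t^n$. Deconcatenation is $\Delta(w_n)=\sum_{k=0}^n w_k\otimes w_{n-k}$, so $\morH$ intertwines $\Delta$ with the coproduct of $\K[X]$ exactly when $H_n(X+Y)=\sum_{k=0}^n H_k(X)H_{n-k}(Y)$, which is Chu--Vandermonde (equivalently $(1+t)^{X+Y}=(1+t)^X(1+t)^Y$). For the second coproduct, the defining formula for $\delta$ on $\qsh(\K)$ --- with $\delta_\K(1)=1\otimes 1$ and all products in $(\K,\cdot)$ equal to $1$ --- specializes to
\[\delta(w_n)=\sum_{q=1}^{n}\ \sum_{\substack{c_1+\dots+c_q=n\\ c_1,\dots,c_q\geq 1}} w_q\otimes\bigl(w_{c_1}\squplus\dots\squplus w_{c_q}\bigr).\]
Applying $\morH\otimes\morH$ and using that $\morH$ is an algebra morphism, one gets $\morH^{\otimes 2}(\delta(w_n))=\sum_{q\geq 0}H_q(X)\,\bigl[t^n\bigr]\bigl((1+t)^Y-1\bigr)^q=\bigl[t^n\bigr]\bigl(1+((1+t)^Y-1)\bigr)^X=\bigl[t^n\bigr](1+t)^{XY}=H_n(XY)$, which is precisely $\delta_{\K[X]}(H_n)$ since $\delta(X)=X\otimes X$ on $\K[X]$. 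Being an algebra isomorphism that is also a coalgebra isomorphism for both $\Delta$ and $\delta$ and sends $1$ to $1$, $\morH$ is an isomorphism of double bialgebras.

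\textbf{Part (2).} Here the cleanest route is naturality of the quasishuffle construction in $V$. The counit $\epsilon_V\colon V\to\K$ is a morphism of (nonunitary) commutative and cocommutative bialgebras: it is multiplicative, it intertwines $\delta_V$ with $\delta_\K$ by counitality of $\delta_V$, and $\K$ is itself commutative and cocommutative. Since all of the structure of $\qsh(V)=(T(V),\squplus,\Delta,\delta)$ is built naturally from $(V,\cdot,\delta_V)$, the map $T(\epsilon_V)\colon\qsh(V)\to\qsh(\K)$, $v_1\dots v_n\mapsto\epsilon_V(v_1)\dots\epsilon_V(v_n)\,w_n$, is a morphism of double bialgebras (it commutes with $\squplus$ because $\epsilon_V$ is multiplicative, with $\Delta$ trivially, and with $\delta$ because $\epsilon_V$ intertwines $\delta_V$ and $\delta_\K$). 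As $\morH_V=\morH\circ T(\epsilon_V)$, it is a composite of morphisms of double bialgebras, hence one itself. Alternatively one may verify $\morH_V$ directly by rerunning the computations of Part (1) with $H_n$ replaced by $\epsilon_V(v_1)\dots\epsilon_V(v_n)H_n$; multiplicativity of $\epsilon_V$ is exactly what makes the ``collision'' contributions of $\squplus$ and the $V$-products occurring inside $\delta$ come out correctly.

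\textbf{Main obstacle.} The only genuine computation is the $\delta$-compatibility in Part (1); everything else is a one-line generating-function manipulation, a classical Stirling or Vandermonde identity, or formal nonsense about induced maps. The point to get right is the explicit shape of $\delta(w_n)$ --- i.e.\ reading the contraction--extraction coproduct of $\qsh(\K)$ off correctly --- and then recognizing the resulting double sum over compositions as coefficient extraction from $(1+t)^{XY}$; the generating series $\sum_{n\geq 0}H_n(X)t^n=(1+t)^X$ is what makes this painless.
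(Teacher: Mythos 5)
Your proof is correct, and Part (1) follows a genuinely different route from the paper's. The paper obtains the isomorphism abstractly: it applies Theorem \ref{theomorphismes} to $B=\K[X]$ with the character $P\mapsto P(1)$ to produce a bialgebra morphism $\phi:\K[X]\to\qsh(\K)$ with $\phi(X^n)=x^{\squplus n}=n!x^n+(\text{lower length})$, deduces bijectivity by triangularity, and then identifies $\morH=\phi^{-1}$ with $x^n\mapsto H_n(X)$ by showing $\morH(x^n)$ vanishes at $0,1,\dots,n-1$ (using the coalgebra morphism property and $\epsilon_\delta\circ\morH=\epsilon_\delta$) and has leading coefficient $1/n!$. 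You instead verify everything by hand: the Stirling identity $w_1^{\squplus n}=\sum_m S(n,m)\,m!\,w_m$ matched against $X^n=\sum_m S(n,m)\,m!\,H_m(X)$ for multiplicativity, Chu--Vandermonde for $\Delta$, and the generating-function computation $\sum_q H_q(X)\bigl((1+t)^Y-1\bigr)^q=(1+t)^{XY}$ for $\delta$. What the paper's route buys is economy (it reuses the universal property already proved and characterizes $H_n$ by its roots rather than by classical identities); what your route buys is that it is self-contained and, notably, it explicitly checks compatibility with the second coproduct $\delta$, a verification the paper's written proof leaves implicit even though the lemma asserts an isomorphism of \emph{double} bialgebras. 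All your identities check out (the composition-indexed form of $\delta(w_n)$ is the correct specialization of the contraction--extraction coproduct, and the substitution $s=(1+t)^Y-1$ is legitimate since only finitely many $q$ contribute to each coefficient of $t^n$). Part (2) is essentially identical to the paper's argument: functoriality applied to the bialgebra morphism $\epsilon_V:V\to\K$, composed with the isomorphism of Part (1).
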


\begin{proof}
1. In order to simplify the reading of the proof, the element $1\in \K\subseteq \qsh(\K)$ is denoted by $x$.
We apply Theorem \ref{theomorphismes} with $B=\K[X]$, with its usual product $m$ and coproducts $\Delta$ and $\delta$. 
with the character $\epsilon_\delta$ of $\K[X]$, which sends any polynomial $P$ on $P(1)$. Let us denote by
$\phi$ the following morphism. Then $\phi(X)=\epsilon_\delta(X)x=x$. By multiplicativity, for any $n\geqslant 1$,
\[\phi(X^n)=x^{\squplus n}=n!x^n+\mbox{a linear span of $x^k$ with $k<n$}.\]
By triangularity, $\phi$ is an isomorphism. Let us denote by $\morH$ the inverse isomorphism, and 
let us prove that $\morH(x^n)=H_n(X)$ for any $n$ by induction on $n$. This obvious if $n=0$ or $1$. 
Let us assume that $n\geq 2$. Let us prove that for any $0\leq k\leq n-1$, $\morH(x^n)(k)=0$ by induction on $k$. 
As $\varepsilon_\Delta\circ \morH=\varepsilon_\Delta$, 
\[\morH(x^n)(0)=\varepsilon_\Delta \circ \morH(x^n)=\varepsilon_\Delta(x^n)=0.\]
If $k\geq 1$, as $\morH$ is a coalgebra morphism,
\begin{align*}
\morH(x^n)(k)&=\morH(x^n)(k-1+1)\\
&=\Delta\circ \morH(x^n)(k-1,1)\\
&=(\morH\otimes \morH)\circ \Delta(x^n)(k-1,k)\\
&=\sum_{l=0}^n \morH(x^l)(k-1)\morH(x^{n-l})(1)\\
&=\morH(x^n)(k-1)+\sum_{l=1}^{n-1}\morH_l(k-1)\morH_{n-l}(1)+\morH(x^n)(1)\\
&=\morH(x^n)(1),
\end{align*}
by the induction hypotheses on $k$ and $n$. 
As $\epsilon_\delta \circ \phi=\epsilon_\delta$, we obtain that $\epsilon_\delta \circ \morH=\epsilon_\delta$,
\[\morH(x^n)(1)=\epsilon_\delta\circ \morH(x^n)=\epsilon_\delta(x^n)=0.\]
Therefore, $\morH(x^n)$ is a multiple of $X(X-1)\ldots(X-n+1)$.  By triangularity of $\phi$, we obtain that 
\[\morH(x^n)=\frac{X^n}{n!}+\mbox{terms of degree $<n$}.\]
Consequently, $\morH(x^n)=H_n(X)$. \\

2. The counit $\epsilon_V:V\longrightarrow \K$ is a bialgebra morphism. By functoriality, we obtain 
a double bialgebra morphism from $\qsh(V)$ to $\qsh(\K)$,
which sends $v_1\ldots v_n \in V^{\otimes n}$ to 
$\epsilon_V(v_1)\ldots \epsilon_V(v_n) x^n$. Composing with the isomorphism of the preceding item, we obtain $\morH_V$.
\end{proof}

As any bialgebra is trivially a bialgebra over $\K$, we immediately obtain:

\begin{cor}
Let $(B,m,\Delta)$ be a connected bialgebra and let $\lambda$ be a character of $B$.
There exists a unique bialgebra morphism $\phi:(B,m,\Delta)\longrightarrow (\K[X],m,\Delta)$ such that 
for any $x\in B$, $\phi(x)(1)=\lambda(x)$. For any $x\in \ker(\varepsilon_\Delta)$,
\[\phi(x)=\sum_{n=1}^\infty \lambda^{\otimes n}\circ \tdelta^{(n-1)}(x) H_n(X).\]
\end{cor}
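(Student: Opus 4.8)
The plan is to transport Theorem \ref{theomorphismes} along the isomorphism of Lemma \ref{lemmeKX}, regarding $B$ as a bialgebra over $V=\K$ in the nonunitary sense. Recall that $u\K\cong\K(\Z/2\Z,\times)$ and that every bialgebra $(B,m,\Delta)$ becomes a bialgebra over $\K$ via the coaction $\rho$ with $\rho(1_B)=1_B\otimes 1_{u\K}$ and $\rho(x)=x\otimes 1_V$ for $x\in\ker(\varepsilon_\Delta)$, where $1_V$ is the canonical basis element of $V=\K\subseteq u\K$; likewise $\qsh(\K)=(T(\K),\squplus,\Delta,\rho)$ is a bialgebra over $\K$ with the analogous coaction. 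Since ``connected'' for a bialgebra over $V$ simply means that the underlying bialgebra is connected, the hypothesis lets me apply Theorem \ref{theomorphismes} with this $V$ and this $\lambda$: it provides a unique morphism $\phi_B:(B,m,\Delta,\rho)\longrightarrow\qsh(\K)$ of bialgebras over $\K$ with $\varepsilon_\delta\circ\phi_B=\lambda$, given on $\ker(\varepsilon_\Delta)$ by the formula (\ref{eqphisurV}).

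First I would set $\phi=\morH\circ\phi_B$, where $\morH:\qsh(\K)\longrightarrow\K[X]$ is the double bialgebra isomorphism of Lemma \ref{lemmeKX}; as a composite of bialgebra morphisms it is a bialgebra morphism $B\longrightarrow\K[X]$. To obtain the stated formula I would evaluate $(\lambda\otimes\id)\circ\rho$ on $x\in\ker(\varepsilon_\Delta)$, getting $\lambda(x)\,1_V\in V$; hence, writing $\tdelta^{(n-1)}(x)=x^{(1)}\otimes\cdots\otimes x^{(n)}$, formula (\ref{eqphisurV}) reads $\phi_B(x)=\sum_{n\geq 1}(\lambda^{\otimes n}\circ\tdelta^{(n-1)}(x))\,w_n$, where $w_n\in\K^{\otimes n}\subseteq T(\K)$ is the word of length $n$ all of whose letters equal $1\in\K$. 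Since $\morH(w_n)=H_n(X)$, applying $\morH$ yields exactly $\phi(x)=\sum_{n\geq 1}(\lambda^{\otimes n}\circ\tdelta^{(n-1)}(x))H_n(X)$, and $\phi(1_B)=1$. For the normalization condition: $\morH$ being a double bialgebra morphism, $\varepsilon_\delta\circ\morH=\varepsilon_\delta$, so $\varepsilon_\delta\circ\phi=\varepsilon_\delta\circ\phi_B=\lambda$; and the counit $\varepsilon_\delta$ of $\K[X]$ is the evaluation $P\mapsto P(1)$ (as recalled in the proof of Lemma \ref{lemmeKX}; indeed $H_n(1)=1$ for $n\in\{0,1\}$ and $H_n(1)=0$ for $n\geq 2$), so this says $\phi(x)(1)=\lambda(x)$ for all $x\in B$, using $\lambda(1_B)=1$ on the line $\K 1_B$.

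For uniqueness I would argue as follows. Let $\psi:(B,m,\Delta)\longrightarrow(\K[X],m,\Delta)$ be a bialgebra morphism with $\psi(x)(1)=\lambda(x)$ for all $x\in B$. Then $\morH^{-1}\circ\psi:B\longrightarrow\qsh(\K)$ is a bialgebra morphism, and the key point is that it is automatically a morphism of bialgebras over $\K$: for \emph{any} bialgebra morphism $f:B\longrightarrow\qsh(\K)$ one has $f(1_B)=1$ and $f(\ker(\varepsilon_\Delta))\subseteq\ker(\varepsilon_\Delta)$ because $\varepsilon_\Delta\circ f=\varepsilon_\Delta$, and since on both sides the coaction is $y\mapsto y\otimes 1_V$ on the kernel of the counit and $1\mapsto 1\otimes 1_{u\K}$ on the unit, the identity $\rho\circ f=(f\otimes\id)\circ\rho$ holds on $\K 1_B$ and on $\ker(\varepsilon_\Delta)$, hence on all of $B$. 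Moreover $\varepsilon_\delta\circ(\morH^{-1}\circ\psi)=\varepsilon_\delta\circ\psi$ sends $x$ to $\psi(x)(1)=\lambda(x)$, so $\varepsilon_\delta\circ(\morH^{-1}\circ\psi)=\lambda$. By the uniqueness part of Theorem \ref{theomorphismes}, $\morH^{-1}\circ\psi=\phi_B$, that is $\psi=\morH\circ\phi_B=\phi$.

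The only genuinely delicate point is this last observation that a bialgebra morphism into $\qsh(\K)$ is forced to respect the $\K$-coactions, which is what lets the uniqueness statement of Theorem \ref{theomorphismes} (phrased for morphisms of bialgebras over $V$) cover all bialgebra morphisms $B\to\K[X]$. Everything else is the routine identification of the abstract coaction appearing in (\ref{eqphisurV}) with the concrete one, together with the computation $\morH(w_n)=H_n(X)$ already contained in Lemma \ref{lemmeKX}.
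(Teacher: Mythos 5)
Your proposal is correct and follows exactly the route the paper intends: the paper states this corollary as an immediate consequence of Theorem \ref{theomorphismes} applied to $B$ viewed as a (trivially) connected bialgebra over $V=\K$, transported along the isomorphism $\morH$ of Lemma \ref{lemmeKX}. The details you supply — the identification $(\lambda\otimes\id)\circ\rho(x)=\lambda(x)1_V$, the computation $\morH(w_n)=H_n(X)$, and especially the observation that the $\K$-coaction condition is automatic for any bialgebra morphism (so that uniqueness over $\K$ yields uniqueness among all bialgebra morphisms) — are precisely the verifications left implicit by the paper's ``we immediately obtain''.
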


When $V$ is the bialgebra of the semigroup $(\N_{>0},+)$, we recover Aguiar, Bergeron and Sottile's result \cite{Aguiar2006-2},
with Proposition \ref{prop6.9}:

\begin{cor}
Let $(B,m,\Delta)$ be a graded bialgebra with $B_0=\K 1_B$ and let $\lambda$ be a character of $B$.
There exists a unique bialgebra morphism $\phi:(B,m,\Delta)\longrightarrow (\QSym,\squplus,\Delta)$ such that 
 $\epsilon_\delta\circ \phi=\lambda$. 
\end{cor}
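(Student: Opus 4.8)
The plan is to read this off from Theorem~\ref{theomorphismes} applied to the nonunitary bialgebra $V=\K(\N_{>0},+)$. As recalled in the introduction, $\qsh(V)=\QSym$ for this $V$, and its $\delta$-counit is the character $\epsilon_\delta$ appearing in the statement. Since the semigroup $(\N_{>0},+)$ has no identity element, $V$ is genuinely nonunitary, and it is commutative, so Theorem~\ref{theomorphismes} is available for every connected bialgebra over $V$.

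First I would translate the hypotheses. Taking $\Omega=(\N_{>0},+)$, so that $u\Omega=\N$, the Example following Definition~\ref{Defi1.7} identifies bialgebras over $\K\Omega$ with connected $u\Omega$-graded bialgebras, i.e. with $\N$-graded bialgebras $B=\bigoplus_{n\geqslant 0}B_n$ satisfying $B_0=\K 1_B$; the coaction is $\rho(x)=x\otimes n$ for $x\in B_n$. Under this correspondence a morphism of bialgebras over $V$ is exactly a homogeneous bialgebra morphism, and composition with $\epsilon_\delta$ is unchanged. So the hypothesis ``$(B,m,\Delta)$ graded with $B_0=\K 1_B$'' is precisely ``$B$ is a bialgebra over $V$'', and the conclusion to be proved is exactly the existence-and-uniqueness statement of Theorem~\ref{theomorphismes} for this $V$.

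It only remains to verify the connectedness hypothesis of Theorem~\ref{theomorphismes}. For $V=\K(\N_{>0},+)$ one has $V^{\cdot n}=\vect(a_1+\dots+a_n\,:\,a_1,\dots,a_n\in\N_{>0})$, which is the span of the basis elements of $V$ of weight at least $n$; hence $\bigcap_{n\geqslant 1}V^{\cdot n}=(0)$, as noted in the Example following Proposition~\ref{prop6.9}. Proposition~\ref{prop6.9} then shows that every bialgebra over $V$, in particular $B$, is connected. Applying Theorem~\ref{theomorphismes} with this $V$ and the given character $\lambda$ yields the unique morphism $\phi\colon B\longrightarrow\qsh(V)=\QSym$ of bialgebras over $V$ with $\epsilon_\delta\circ\phi=\lambda$, described on $\ker(\varepsilon_\Delta)$ by formula~(\ref{eqphisurV}); this is the claim.

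I do not expect any genuine difficulty: all the substance lies in Theorem~\ref{theomorphismes} and Proposition~\ref{prop6.9}. The only points needing care are the two dictionary entries used above---that a graded bialgebra with $B_0=\K 1_B$ is the same datum as a bialgebra over $\K(\N_{>0},+)$, and that the relevant morphisms and the character match up---both already recorded in the excerpt. It is also worth noting that ``bialgebra morphism'' in the statement should be understood as ``homogeneous bialgebra morphism'' in the sense of this correspondence, which is how Aguiar--Bergeron--Sottile phrase their result.
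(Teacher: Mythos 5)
Your proposal is correct and is exactly the paper's route: the corollary is presented as an immediate consequence of Theorem \ref{theomorphismes} applied to $V=\K(\N_{>0},+)$, with connectedness supplied by Proposition \ref{prop6.9} and the identification of bialgebras over $\K\Omega$ with connected $u\Omega$-graded bialgebras. Your closing caveat that the morphism should be read as homogeneous (i.e.\ a morphism of bialgebras over $V$) is also the right reading of the statement.
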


\subsection{Double bialgebra morphisms}

\begin{theo} \label{theo7.10}
Let $V$ be a nonunitary, commutative and cocommutative bialgebra, 
and let $(B,m,\Delta,\delta,\rho)$ be a connected double bialgebra over $V$.
There exists a unique morphism $\phi$ from $(B,m,\Delta,\delta,\rho)$ to $(T(V),\squplus,\Delta,\delta,\rho)$ 
of double bialgebras over $V$. For any $x\in \ker(\varepsilon_\Delta)$,
\begin{align*}
\phi(x)&=\sum_{n=1}^\infty \underbrace{((\epsilon_\delta \otimes \id)\circ \rho)^{\otimes n}\circ \tdelta^{(n-1)}(x)}
_{\in V^{\otimes n}}.
\end{align*}
\end{theo}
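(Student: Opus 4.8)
The plan is to deduce the statement from Theorem~\ref{theomorphismes} applied to the character $\epsilon_\delta$, and then to check that the morphism thus obtained is also compatible with the second coproduct. Since the second coproduct $\delta$ of $B$ is an algebra morphism, its counit $\epsilon_\delta$ is a character of $(B,m)$; applying Theorem~\ref{theomorphismes} with $\lambda=\epsilon_\delta$ yields the unique morphism $\phi$ of bialgebras over $V$ from $B$ to $\qsh(V)$ with $\epsilon_\delta\circ\phi=\epsilon_\delta$, together with the announced formula. This already gives uniqueness: a morphism of double bialgebras over $V$ from $B$ to $\qsh(V)$ is in particular a morphism of bialgebras over $V$ which commutes with the counits $\epsilon_\delta$, hence it equals $\phi$ by Theorem~\ref{theomorphismes}. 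It remains to prove that $\phi$ is compatible with $\delta$, i.e. that the two maps $\Phi_1:=\delta\circ\phi$ and $\Phi_2:=(\phi\otimes\phi)\circ\delta_B$ from $B$ to $\qsh(V)\otimes\qsh(V)$ coincide (here $\delta$ denotes the second coproduct of $\qsh(V)$ and $\delta_B$ that of $B$).

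I would prove $G:=\Phi_1-\Phi_2=0$ by induction on $n$ along the coradical filtration $(B_{\leq n})$ of $B$, the case $n=0$ being immediate since $\Phi_1(1_B)=\Phi_2(1_B)=1\otimes 1$. The crucial observation is that both $\Phi_1$ and $\Phi_2$ satisfy the \emph{same} functional equation
\[(\Delta\otimes\id)\circ\Phi=\squplus_{1,3,24}\circ(\Phi\otimes\Phi)\circ\Delta_B,\]
which follows from the cointeraction identity $(\Delta\otimes\id)\circ\delta=\squplus_{1,3,24}\circ(\delta\otimes\delta)\circ\Delta$ in $\qsh(V)$ (for $\Phi_1$) and from $(\Delta_B\otimes\id)\circ\delta_B=m_{1,3,24}\circ(\delta_B\otimes\delta_B)\circ\Delta_B$ in $B$ (for $\Phi_2$), together with the fact that $\phi$ is simultaneously an algebra and a coalgebra morphism (so that $\phi^{\otimes 3}\circ m_{1,3,24}=\squplus_{1,3,24}\circ\phi^{\otimes 4}$). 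Writing $\Delta_B=(\cdot)\otimes 1+1\otimes(\cdot)+\tdelta_B$ and using $\Phi_i(1_B)=1\otimes 1$, this specialises, for $x\in\ker(\varepsilon_\Delta)\cap B_{\leq n}$, to
\[(\tdelta\otimes\id)\circ\Phi_i(x)=\squplus_{1,3,24}\circ(\Phi_i\otimes\Phi_i)\circ\tdelta_B(x)\]
(the first tensor leg of $\Phi_i(x)$ lies in $\ker(\varepsilon_\Delta)$ because $(\varepsilon_\Delta\otimes\id)\circ\delta=\iota\circ\varepsilon_\Delta$ holds in both $\qsh(V)$ and $B$). Since $\tdelta_B(x)\in B_{\leq n-1}^{\otimes 2}$, the induction hypothesis gives $(\Phi_1\otimes\Phi_1)\circ\tdelta_B(x)=(\Phi_2\otimes\Phi_2)\circ\tdelta_B(x)$, hence $(\tdelta\otimes\id)\circ G(x)=0$; as the primitives of $\qsh(V)$ lying in $\ker(\varepsilon_\Delta)$ are exactly the length-one words, this means $G(x)\in V\otimes\qsh(V)$.

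To conclude the inductive step I would use two identities valid without any induction. First, $(\epsilon_\delta\otimes\id)\circ\delta=\id$ and $\epsilon_\delta\circ\phi=\epsilon_\delta$ give $(\epsilon_\delta\otimes\id)\circ\Phi_1=\phi=(\epsilon_\delta\otimes\id)\circ\Phi_2$, so $(\epsilon_\delta\otimes\id)\circ G=0$; on an element $y$ with $G(y)\in V\otimes\qsh(V)$ this reads $(\epsilon_V\otimes\id)\circ G(y)=0$, because $\epsilon_\delta$ restricts to $\epsilon_V$ on the length-one words $V$. Second, the ``over $V$'' compatibility of $\delta$ with $\rho$ in $\qsh(V)$ and in $B$, together with $\rho\circ\phi=(\phi\otimes\id)\circ\rho_B$, yields
\[(\rho\otimes\id)\circ G(x)=(\id\otimes c)\circ(G\otimes\id_V)\circ\rho_B(x),\]
where $c$ is the flip $\qsh(V)\otimes V\to V\otimes\qsh(V)$. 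Now $\rho_B$ preserves $\ker(\varepsilon_\Delta)$ and the coradical filtration (iterate the compatibility of $\rho_B$ with $\Delta_B$, exactly as in the proof of Proposition~\ref{prop6.9}), so, writing $\rho_B(x)=\sum_i x_0^{(i)}\otimes x_1^{(i)}$, one has $x_0^{(i)}\in\ker(\varepsilon_\Delta)\cap B_{\leq n}$; hence $G(x_0^{(i)})\in V\otimes\qsh(V)$ by the previous paragraph and therefore $(\epsilon_V\otimes\id)\circ G(x_0^{(i)})=0$. Applying $\epsilon_V\otimes\id_V\otimes\id_{\qsh(V)}$ to the displayed identity, the left-hand side collapses to $G(x)$ (since $(\epsilon_V\otimes\id)\circ\rho=\id$ on length-one words, $\rho$ restricting there to $\delta_V$), while the right-hand side vanishes. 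Thus $G(x)=0$, completing the induction and the proof.

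The step I expect to be the main obstacle is verifying the common functional equation and its specialisation: it requires a careful diagram chase combining the two $\Delta$--$\delta$ cointeractions with the fact that $\phi$ is at once an algebra and a coalgebra morphism, and — since $V$ is nonunitary here — keeping the condition $\rho(\ker(\varepsilon_\Delta))\subseteq B\otimes V$ under control at every stage, so that statements such as ``$\epsilon_\delta$ restricts to $\epsilon_V$ on $V$'' and ``$(\epsilon_V\otimes\id)\circ\rho=\id$ on length-one words'' are legitimate.
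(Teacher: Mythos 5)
Your proposal is correct and follows essentially the same route as the paper: uniqueness and the underlying morphism of bialgebras over $V$ come from Theorem \ref{theomorphismes} applied to $\lambda=\epsilon_\delta$, and compatibility with $\delta$ is proved by induction along the coradical filtration, using the $\Delta$--$\delta$ cointeraction to place the defect $G(x)$ in $V\otimes \qsh(V)$ and then the $\rho$--$\delta$ compatibility together with the counit $\epsilon_\delta$ to show it vanishes. The only (harmless) difference is in the final step, where you route through the observation that $\rho_B$ preserves $\ker(\varepsilon_\Delta)$ and the coradical filtration so as to apply $(\epsilon_V\otimes \id)\circ G=0$ to the legs $x_0$, whereas the paper simply applies $\epsilon_\delta\otimes \id\otimes \id$ to both sides and uses the identity $(\epsilon_\delta\otimes \id)\circ \delta\circ \phi=\phi=(\epsilon_\delta\otimes \id)\circ (\phi\otimes \phi)\circ \delta$ on all of $B$, which makes that detour unnecessary.
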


\begin{proof}
\textit{Uniqueness}: such a morphism is a morphism $\phi$ from $(B,m,\Delta,\rho)$ to $(B,m,\Delta,\rho)$
with $\epsilon_\delta\circ \phi=\epsilon_\delta$. By Theorem \ref{theomorphismes}, it is unique.\\

\textit{Existence}: let $\phi:(B,m,\Delta,\rho)\longrightarrow (B,m,\Delta,\rho)$ be the (unique) morphism such that 
$\epsilon_\delta \circ \phi=\epsilon_\delta$. Let us prove that for any $x\in B_{\leq n}$,
$\delta \circ \phi(x)=(\phi\otimes \phi)\circ \delta(x)$ by induction on $n$. If $n=0$,
we can assume that $x=1_B$. Then
\[\delta \circ \phi(1_B)=(\phi\otimes \phi)\circ \delta(1_B)=1\otimes 1.\]
Let us assume the result at all ranks $<n$, with $n\geq 2$. Let $x\in \ker(\varepsilon_\Delta)$.
As $(\varepsilon_\Delta\otimes \id)\circ \delta(x)=\varepsilon_\Delta(x)1$, $\delta(x)\in \ker(\varepsilon_\Delta)\otimes B$. 
\begin{align*}
(\tdelta \otimes \id)\circ \delta\circ \phi(x)&=m_{1,3,24}\circ (\delta \otimes \delta)\circ \tdelta \circ \phi(x)\\
&=m_{1,3,24}\circ (\delta \otimes \delta)\circ (\phi\otimes \phi)\circ \tdelta(x)\\
&=m_{1,3,24}\circ (\phi\otimes \phi\otimes \phi\otimes \phi)\circ  (\delta \otimes \delta)\circ \tdelta(x)\\
&=(\phi\otimes \phi\otimes \phi)\circ m_{1,3,24}\circ (\delta \otimes \delta)\circ \tdelta(x)\\
&=(\phi\otimes \phi\otimes \phi)\circ (\tdelta \otimes \id)\circ \delta(x)\\
&=(\tdelta\otimes \id)\circ (\phi\otimes \phi)\circ \tdelta(x).
\end{align*}
We used the induction hypothesis on the both sides of the tensors appearing in $\tdelta(x)$ for the third equality.
We deduce that $(\delta \circ \phi-\phi\otimes \phi)\circ \delta(x) \in \ker(\tdelta\otimes \id)=V\otimes T(V)$.
Moreover,
\begin{align*}
(\id \otimes c)\circ (\rho \otimes \id)\circ \delta\circ \phi(x)
&=(\delta \otimes \id)\circ \rho\circ \phi(x)\\
&=(\delta \otimes \id)\circ (\phi\otimes \id)\circ \rho(x),\\
(\id \otimes c)\circ (\rho \otimes \id)\circ (\phi\otimes \phi)\circ \delta(x)
&=(\id \otimes c)\circ (\phi\otimes \id\otimes \phi)\circ (\rho \otimes \id)\circ \delta(x)\\
&=(\phi\otimes \phi\otimes \id)\circ (\id \otimes c)\circ (\rho\otimes \id)\circ \delta(x)\\
&=(\phi\otimes \phi\otimes \id)\circ (\delta\otimes \id)\circ \rho(x).
\end{align*} 
Putting $y=(\delta\circ \phi-\phi\otimes \phi)\circ \delta(x)\in V\otimes T(V)$, we proved that
\[(\id \otimes c)\circ (\rho\otimes \id)(y)=((\delta \circ \phi-(\phi\otimes \phi)\circ \delta)\otimes \id)\circ \rho(x).\]
As $y\in V\otimes T(V)$,
\[ \rho \otimes \id(y)=\delta_V\otimes \id(y).\]
Consequently,
\begin{align*}
(\epsilon_\delta\otimes \id \otimes \id)\circ (\rho \otimes \id)(y)
=(\epsilon_V \otimes \id \otimes \id)\circ (\delta_V\otimes \id)(y)=y.
\end{align*}
Moreover,
\begin{align*}
(\epsilon_\delta\otimes \id \otimes \id)\circ (\rho \otimes \id)(y)
&=(\epsilon_\delta\otimes \id \otimes \id)\circ (\delta \circ \phi\otimes \id)\circ \rho(x)\\
&-(\epsilon_\delta\otimes \id \otimes \id)\circ (((\phi\otimes \phi)\circ \delta)\otimes \id)\circ \rho(x)\\
&=(\phi\otimes \id)\circ \rho(x)-((((\epsilon_\delta \circ \phi)\otimes \phi)\circ \delta)\otimes \id)\circ \rho(x)\\
&=(\phi\otimes \id)\circ \rho(x)-(((\epsilon_\delta \otimes \phi)\circ \delta)\otimes \id)\circ \rho(x)\\
&=(\phi\otimes \id)\circ \rho(x)-(\phi\otimes \id)\circ \rho(x)\\
&=0.
\end{align*}
Hence, $y=0$, so $\delta\circ \phi(x)=(\phi\otimes \phi)\circ \delta(x)$. \end{proof}

Applying to $V=\K$ or $V=\K({>0},+)$:

\begin{cor} \label{cor7.8}
\begin{enumerate}
\item Let $(B,m,\Delta)$ be a connected double bialgebra.
There exists a unique double bialgebra morphism $\phi$ from $(B,m,\Delta,\delta)$  to $(\K[X],m,\Delta,\delta)$. 
For any $x\in \ker(\varepsilon_\Delta)$,
\[\phi(x)=\sum_{n=1}^\infty \epsilon_\delta^{\otimes n}\circ \tdelta^{(n-1)}(x) H_n(X).\]
\item Let $(B,m,\Delta)$ be a graded, connected double bialgebra, such that for any $n\in \N$,
\[\delta(B_n)\subseteq B_n \otimes B.\]
There exists a unique homogeneous double bialgebra morphism 
$\phi$ from $(B,m,\Delta,\delta)$ to $(\QSym,\squplus,\Delta,\delta)$. For any $x\in \ker(\varepsilon_\Delta)$,
\[\phi(x)=\sum_{n=1}^\infty \sum_{k_1,\ldots,k_n \geq 1}
\epsilon_\delta^{\otimes n}\circ (\pi_{k_1}\otimes \ldots \otimes \pi_{k_n})\circ \tdelta^{(n-1)}(x)
(k_1,\ldots,k_n).\]
\item Let $\Omega$ be a commutative monoid and let $(B,m,\Delta)$ be a connected $\Omega$-graded double bialgebra,
connected as a coalgebra, such that for any $\alpha \in \Omega$,
\[\delta(B_\alpha)\subseteq B_\alpha \otimes B.\]
There exists a unique homogeneous double bialgebra morphism 
$\phi$ from $(B,m,\Delta,\delta)$ to $\qsh(\K\Omega)$. For any $x\in \ker(\varepsilon_\Delta)$,
\[\phi(x)=\sum_{n=1}^\infty \sum_{\alpha_1,\ldots,\alpha_n \in \Omega}
\epsilon_\delta^{\otimes n}\circ (\pi_{\alpha_1}\otimes \ldots \otimes \pi_{\alpha_n})\circ \tdelta^{(n-1)}(x)
(\alpha_1,\ldots,\alpha_n).\]
\end{enumerate}
\end{cor}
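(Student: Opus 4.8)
The plan is to deduce each of the three items from Theorem~\ref{theo7.10} by choosing the appropriate base bialgebra $V$, identifying the target quasishuffle double bialgebra with $\K[X]$, $\QSym$ or $\qsh(\K\Omega)$, and then checking two things: that the hypotheses make $B$ a \emph{connected double bialgebra over $V$}, and that a morphism of the type named in the statement is exactly a morphism of double bialgebras over $V$ from $B$ into that quasishuffle algebra. Once these two points are settled, existence, uniqueness and the explicit formula all follow by unwinding the formula of Theorem~\ref{theo7.10}.

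For item~1 I would take $V=\K$, treated as a nonunitary bialgebra, so that by the examples following Definition~\ref{defi1.4} every double bialgebra is canonically a double bialgebra over $\K$, the coaction being $\rho(x)=\varepsilon_\Delta(x)1_B\otimes 1+(x-\varepsilon_\Delta(x)1_B)\otimes X$. Because this $\rho$ is determined by $\varepsilon_\Delta$ and the unit, any double bialgebra morphism is automatically a $\K$-comodule morphism, so morphisms of double bialgebras over $\K$ from $B$ to $\qsh(\K)$ are precisely the double bialgebra morphisms $B\to\qsh(\K)$. Theorem~\ref{theo7.10} then gives the unique such $\phi$ together with the formula $\phi(x)=\sum_{n\geq1}((\epsilon_\delta\otimes\id)\circ\rho)^{\otimes n}\circ\tdelta^{(n-1)}(x)$. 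To make it explicit, note that each factor $(\epsilon_\delta\otimes\id)\circ\rho$ sends $y\in\ker(\varepsilon_\Delta)$ to $\epsilon_\delta(y)$ times the basis vector $X$ of $V=\K$, so the $n$-th summand is $\epsilon_\delta^{\otimes n}(\tdelta^{(n-1)}(x))$ times the length-$n$ word of $\qsh(\K)$; composing with the isomorphism $\morH$ of Lemma~\ref{lemmeKX}, which carries that word to $H_n(X)$, yields the stated expansion and transports uniqueness to $\K[X]$.

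For items~2 and~3 I would take $V=\K(\N_{>0},+)$, respectively $V=\K\Omega$. By the examples following Definitions~\ref{defioverV} and~\ref{Defi1.7}, a bialgebra over $\K\Omega$ is a $u\Omega$-graded bialgebra $B=\bigoplus_{\alpha\in u\Omega}B_\alpha$ with $B_e=\K1_B$, connected as a coalgebra, with coaction $\rho(x)=x\otimes\alpha$ on $B_\alpha$; and a homogeneous double bialgebra morphism into $\qsh(\K\Omega)$ is exactly a morphism of double bialgebras over $\K\Omega$. The one point requiring care is to recognize that the hypothesis $\delta(B_\alpha)\subseteq B_\alpha\otimes B$ is equivalent to the compatibility $(\id\otimes c)\circ(\rho\otimes\id)\circ\delta=(\delta\otimes\id)\circ\rho$ of Definition~\ref{defi1.4}: on a homogeneous $x\in B_\alpha$ the right-hand side is $\delta(x)\otimes\alpha$, while, writing $\delta(x)=x'\otimes x''$, the left-hand side is $(x')_0\otimes x''\otimes(x')_1$, and these agree precisely when $x'$ may be chosen of degree $\alpha$. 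For $\Omega=\N_{>0}$ coalgebra-connectedness is automatic by Proposition~\ref{prop6.9}, while for a general commutative monoid it is the extra hypothesis in the statement; in either case $B$ is a connected double bialgebra over $V$, Theorem~\ref{theo7.10} applies, and the formula is made explicit by inserting into each of the $n$ tensor slots the fact that $(\epsilon_\delta\otimes\id)\circ\rho=\sum_\alpha\epsilon_\delta(\pi_\alpha(\,\cdot\,))\,\alpha$ (with $\pi_\alpha$ the projection onto $B_\alpha$), and using that $(\alpha_1)\otimes\cdots\otimes(\alpha_n)\in V^{\otimes n}=T(V)$ is the basis word $(\alpha_1,\ldots,\alpha_n)$ of $\qsh(\K\Omega)$ — the monomial quasisymmetric function $(k_1,\ldots,k_n)$ when $\Omega=\N_{>0}$ and $\qsh(\K(\N_{>0},+))=\QSym$.

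All the computations involved are short; the only genuinely delicate points are the equivalence between $\delta(B_\alpha)\subseteq B_\alpha\otimes B$ and the comodule compatibility of the second coproduct, and, in item~1, the bookkeeping that converts the abstract formula of Theorem~\ref{theo7.10} into the Hilbert-polynomial expansion via the isomorphism $\morH$ of Lemma~\ref{lemmeKX}.
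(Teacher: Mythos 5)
Your proposal is correct and follows exactly the route the paper intends: the corollary is stated as an immediate application of Theorem \ref{theo7.10} with $V=\K$, $V=\K(\N_{>0},+)$ or $V=\K\Omega$, using the examples identifying (double) bialgebras over these $V$ with the structures named in the statement, Proposition \ref{prop6.9} for connectedness, and the isomorphism $\morH$ of Lemma \ref{lemmeKX} to land in $\K[X]$. Your two ``delicate points'' (the equivalence of $\delta(B_\alpha)\subseteq B_\alpha\otimes B$ with the coaction compatibility, and the Hilbert-polynomial bookkeeping) are precisely the identifications the paper has already recorded in its examples and in Lemma \ref{lemmeKX}, so nothing is missing.
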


As an application, let us give a generalization  of Hoffman's isomorphism between shuffle and quasishuffle algebras
\cite{Hoffman2000,Hoffman2020}:

\begin{theo}
Let $(V,\cdot)$ be a nonunitary, commutative algebra. The following map is a Hopf algebra isomorphism:
\begin{align*}
\Theta_V&:\left\{\begin{array}{rcl}
\Sh(V)=(T(V),\shuffle,\Delta)&\longrightarrow&\QSh(V)=(T(V),\squplus,\Delta)\\
w&\longrightarrow&\displaystyle \sum_{\substack{w=w_1\ldots w_k,\\ w_1,\ldots,w_k\neq \emptyset}}
\frac{1}{\ell(w_1)!\ldots \ell(w_k)!}|w_1|\ldots |w_k|,
\end{array}\right.
\end{align*}
where for any word $w$, $|w|$ is the product in $V$ of its letters, and $\ell(w)$ its length.
\end{theo}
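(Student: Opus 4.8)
The plan is to verify, in this order, that $\Theta_V$ is a morphism of coalgebras, that it is bijective, and that it is a morphism of algebras from $(T(V),\shuffle)$ to $(T(V),\squplus)$; the Hopf algebra claim will then follow formally, since $\Sh(V)$ and $\QSh(V)$ are both Hopf algebras ($\Delta$ being conilpotent, each has an antipode) and any bijective bialgebra morphism between Hopf algebras is an isomorphism of Hopf algebras. Throughout, for a word $w=v_1\ldots v_n$ and a composition $I=(i_1,\ldots,i_r)$ of $n$ I write $w_I$ for the length-$r$ word $|v_1\ldots v_{i_1}|\ldots|v_{i_{r-1}+1}\ldots v_n|$ obtained by replacing each block by the product of its letters in $(V,\cdot)$, and $I!=i_1!\ldots i_r!$, so that $\Theta_V(w)=\sum_I I!^{-1}\,w_I$ and $\Theta_V(1)=1$ (the sum over compositions being finite).

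For the coalgebra statement, note that deconcatenating $w_I$ amounts to choosing a block boundary of $I$ (the two extreme ones included), and that a composition $I$ of $[n]$ together with a distinguished boundary after position $m$ is the same datum as a pair $(J,J')$ with $J$ a composition of $[m]$, $J'$ a composition of $[m+1,n]$, $I!=J!\,J'!$ and $w_I=(v_1\ldots v_m)_J\,(v_{m+1}\ldots v_n)_{J'}$. Matching $\Delta\circ\Theta_V(w)=\sum_I I!^{-1}\Delta(w_I)$ with $(\Theta_V\otimes\Theta_V)\circ\Delta(w)=\sum_{m=0}^n\Theta_V(v_1\ldots v_m)\otimes\Theta_V(v_{m+1}\ldots v_n)$ term by term along this correspondence gives $\Delta\circ\Theta_V=(\Theta_V\otimes\Theta_V)\circ\Delta$, and compatibility with $\varepsilon_\Delta$ is immediate. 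Bijectivity is equally easy: the finest composition $(1,\ldots,1)$ contributes $w_I=w$ with coefficient $1$, while every other composition of $n$ gives a word of length $<n$, so $\Theta_V$ preserves the filtration $T(V)_{\leqslant N}=\bigoplus_{n\leqslant N}V^{\otimes n}$ and is unitriangular on each $T(V)_{\leqslant N}$, hence invertible on $T(V)=\bigcup_N T(V)_{\leqslant N}$.

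The heart of the proof is multiplicativity, $\Theta_V(u\shuffle v)=\Theta_V(u)\squplus\Theta_V(v)$ for $u=a_1\ldots a_k$, $v=a_{k+1}\ldots a_{k+l}$. Expanding the left-hand side gives $\sum_\sigma\sum_I I!^{-1}\,(w^\sigma)_I$, summed over the $\binom{k+l}{k}$ shuffle permutations $\sigma$ (with interleaving $w^\sigma$) and all compositions $I$ of $[k+l]$. The key observation, where commutativity of $(V,\cdot)$ enters, is that inside each block of $I$ the positions occupied by letters of $u$ form a consecutive range of $[k]$, and likewise for $v$; hence the product in $(V,\cdot)$ of the letters of a block equals $|a_\alpha\ldots a_\beta|\cdot|a_\gamma\ldots a_\delta|$ for a consecutive run of $u$ and a consecutive run of $v$ (one possibly empty), so $(w^\sigma)_I$ is a term of $u_J\squplus v_{J'}$ for uniquely determined compositions $J$ of $[k]$, $J'$ of $[k+1,k+l]$ and a uniquely determined quasishuffle $\tau$ of their blocks. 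Conversely, each datum $(J,J',\tau)$ is produced by exactly $\prod_t\binom{p_t+q_t}{p_t}$ pairs $(\sigma,I)$, where the $t$-th block of $I$ holds $p_t$ letters of $u$ and $q_t$ of $v$, obtained by freely reshuffling within each block the $u$-letters among the $v$-letters, which by commutativity does not change $(w^\sigma)_I$. Since $\prod_t\binom{p_t+q_t}{p_t}\big/\prod_t(p_t+q_t)!=\prod_t(p_t!\,q_t!)^{-1}=(J!\,J'!)^{-1}$, summing the left-hand side over these fibres recovers exactly $\sum_{J,J',\tau}(J!\,J'!)^{-1}(\text{term of }u_J\squplus v_{J'})=\Theta_V(u)\squplus\Theta_V(v)$.

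Putting the three steps together, $\Theta_V$ will be a bijective bialgebra morphism from $\Sh(V)$ to $\QSh(V)$, hence a Hopf algebra isomorphism. (Alternatively, I could reduce multiplicativity to the case where $(V,\cdot)$ is a free nonunitary commutative algebra using naturality of $\Theta$ in $V$, and then invoke Hoffman's theorem \cite{Hoffman2000,Hoffman2020}.) The hard part will be the combinatorial bookkeeping in the multiplicativity step — describing the fibres of the map $(\sigma,I)\mapsto(J,J',\tau)$ and checking that their cardinality is exactly what converts the shuffle weight $I!^{-1}$ into the quasishuffle weight $(J!\,J'!)^{-1}$ — whereas the coalgebra, bijectivity and antipode parts are routine.
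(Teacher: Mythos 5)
Your proposal is correct, but it takes a genuinely different route from the paper. The paper does not verify multiplicativity combinatorially at all: it first treats the case where $V$ carries a commutative, cocommutative, counitary bialgebra structure, observes that $(T(V),\shuffle,\Delta,\rho)$ is then a bialgebra over $V$ and that $\lambda(x_1\ldots x_k)=\tfrac{1}{k!}\epsilon_V(x_1)\ldots\epsilon_V(x_k)$ is a character of $(T(V),\shuffle)$, and invokes the universal property (Theorem \ref{theomorphismes}) to produce a Hopf algebra morphism whose explicit formula $\sum_n((\lambda\otimes\id)\circ\rho)^{\otimes n}\circ\tdelta^{(n-1)}$ is computed to be exactly $\Theta_V$; the general commutative algebra $(V,\cdot)$ is then handled by applying this to $S_+(V)$ and descending along the surjection $T(S_+(V))\to T(V)$ induced by $v_1\ldots v_k\mapsto v_1\cdot\ldots\cdot v_k$, with bijectivity by the same triangularity argument you use. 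Your approach is instead a direct, self-contained verification in the spirit of Hoffman's original proof: the coalgebra and bijectivity steps are routine and correctly handled, and the multiplicativity step is correctly set up --- the consecutivity of the $u$-positions and $v$-positions inside each block of $I$, the use of commutativity to rewrite each block product as $|$run of $u|\cdot|$run of $v|$, the fibre cardinality $\prod_t\binom{p_t+q_t}{p_t}$ over a datum $(J,J',\tau)$, and the identity $\prod_t\binom{p_t+q_t}{p_t}/(p_t+q_t)!=1/(J!\,J'!)$ are precisely the ingredients needed, so the argument closes. What each approach buys: the paper's proof is short given the machinery already built, explains conceptually where the coefficients $1/\ell(w_i)!$ come from (they are forced by the character $\lambda$), and illustrates the universal property; yours avoids the detour through $S(V)$ and the naturality/surjectivity diagram chase, works uniformly for any commutative nonunitary algebra from the start, and makes the combinatorial content of the isomorphism explicit, at the cost of the bookkeeping you rightly identify as the hard part.
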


\begin{proof}
We first prove this result when $(V,\cdot,\delta_V)$ is a commutative, cocommutative, counitary bialgebra, of counit $\epsilon_V$.
First, observe that $(T(V),\shuffle,\Delta,\rho)$ is a bialgebra over $(V,\cdot,\delta_V)$ and that the following map
is a character of $(T(V),\shuffle)$: for any word $w=x_1\ldots x_k$, 
\[\lambda(w)=\frac{1}{k!}\epsilon_V(x_1)\ldots \epsilon_V(x_k).\]
By the universal property of the quasishuffle algebra, there exists a unique Hopf algebra morphism
$\Theta_V:(T(V),\shuffle,\Delta)\longrightarrow (T(V),\squplus,\Delta)$ such that $\epsilon\circ \Theta_V=\lambda$.
For any word $w=v_1\ldots v_k$,
\begin{align*}
(\lambda \otimes \id)\circ \rho(w)&=\lambda(v'_1\ldots v'_k)\ v''_1\cdot \ldots \cdot v''_k\\
&=\frac{1}{k!}\epsilon_V(v'_1)\ldots \epsilon_V(v'_k)\ v''_1\cdot \ldots \cdot v''_k\\
&=\frac{1}{k!}v_1\cdot \ldots \cdot v_k\\
&=\frac{1}{\ell(w)!}|w|.
\end{align*}
Hence,
\begin{align*}
\Theta_V(w)&=\sum_{k=1}^\infty ((\lambda \otimes \id)\circ \rho)^{\otimes k}\circ \tdelta^{(k-1)}(w)\\
&=\sum_{\substack{w=w_1\ldots w_k,\\ w_1,\ldots,w_k\neq \emptyset}}
 ((\lambda \otimes \id)\circ \rho)^{\otimes k}(w_1\otimes \ldots \otimes w_k)\\
 &=\sum_{\substack{w=w_1\ldots w_k,\\ w_1,\ldots,w_k\neq \emptyset}}\frac{1}{\ell(w_1)!\ldots \ell(w_k)!}|w_1|\ldots |w_k|.
\end{align*}

Let us now consider an commutative algebra $(V,\cdot)$. Let $(S(V),m,\Delta)$ be the symmetric algebra
generated by $V$, with its usual product and coproduct.
Applying the first item to $S(V)$, we obtain a Hopf algebra morphism $\Theta_{S(V)}:(T(S(V)),\shuffle,\Delta)\longrightarrow
(T(S(V)),\squplus,\Delta)$. By restriction, we obtain a Hopf algebra morphism $\Theta_{S_+(V)}:
(T(S_+(V)),\shuffle,\Delta)\longrightarrow (T(S_+(V)),\squplus,\Delta)$. 
The canonical algebra morphism $\pi:S_+(V)\longrightarrow V$, sending $v_1\ldots v_k$ to $v_1\cdot \ldots \cdot v_k$
(which exists as $V$ is commutative), induces a surjective morphism $\varpi:T(S_+(V))\longrightarrow T(V)$,
which is obviously a Hopf algebra morphism from $(T(S_+(V)),\shuffle,\Delta)$ to $(T(V),\shuffle,\Delta)$ 
and from $(T(S_+(V)),\squplus,\Delta)$ to $(T(V),\squplus,\Delta)$. Moreover, the following diagram is commutative:
\[\xymatrix{(T(S_+(V)),\shuffle,\Delta)\ar[r]^{\Theta_{S^+(V)}}\ar[d]_\varpi &(T(S_+(V)),\squplus,\Delta)\ar[d]^\varpi \\
(T(V),\shuffle,\Delta)\ar[r]_{\Theta_V}&(T(V),\squplus,\Delta)}\]
As the vertical arrows are surjective Hopf algebra morphisms and the top horizontal arrow is also a Hopf algebra morphism,
the bottom horizontal arrow is also a Hopf algebra morphism.
For any word $w$, $\Theta_V(w)-w$ is a linear span of words of length $<\ell(w)$. 
By a triangularity argument, $\Theta_V$ is bijective.
\end{proof}

\begin{remark}
Using the same argument as in \cite{Hoffman2000}, it is not difficult to prove that for any nonempty word $w\in T(V)$,
\[\Theta_V^{-1}(w)=\sum_{\substack{w=w_1\ldots w_k,\\ w_1,\ldots,w_k\neq \emptyset}}
\frac{(-1)^{\ell(w)+k}}{\ell(w_1)\ldots \ell(w_k)}|w_1|\ldots |w_k|.\]
\end{remark}

It is immediate to show that $\Theta$ is a natural transformation from the functor $\Sh$ to the functor $\QSh$,
that is to say, if $\alpha:V\longrightarrow W$ is a morphism between two commutative non unitary algebras,
then $T(\alpha)\circ \Theta_V=\Theta_W\circ T(\beta)$, as Hopf algebra morphisms from $\Sh(V)$ to $\QSh(W)$.
Let us prove a unicity result:

\begin{prop}
Let $\Upsilon$ be a natural transformation from the functor $\Sh$ to the functor $\QSh$ (functors
from the category of commutative nonunitary algebras to the category of Hopf algebras).
There exists $\mu\in \K$ such that $\Upsilon=\Theta\circ \Phi^{(\mu)}$, where $\Phi^{(\mu)}$ is the natural transformation
from $\Sh$ to $\Sh$ defined for any commutative nonunitary algebra $V$ by
\begin{align*}
&\forall v_1,\ldots,v_n\in V,&\Phi^{(\mu)}_V(v_1\ldots v_n)&=\mu^n v_1\ldots v_n.
\end{align*}
\end{prop}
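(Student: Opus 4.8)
The plan is to exploit the Hopf algebra isomorphism $\Theta$ of the previous theorem to reduce the statement to a description of the natural endomorphisms of the functor $\Sh$. Since naturality of $\Theta$ was noted above and each $\Theta_V$ is invertible, $\Theta$ is a natural isomorphism, so the assignment $V\mapsto \Theta_V^{-1}\circ\Upsilon_V$ defines a natural transformation $\Psi$ from $\Sh$ to $\Sh$ (as functors valued in Hopf algebras), and $\Upsilon=\Theta\circ\Psi$. It therefore suffices to prove that $\Psi=\Phi^{(\mu)}$ for some $\mu\in\K$. For this, fix $n\geq 1$, let $V=S_+(\K a_1\oplus\cdots\oplus\K a_n)$ be the free commutative nonunitary algebra on generators $a_1,\dots,a_n$, and regard $a_1\cdots a_n$ as a length-$n$ word in $T(V)$. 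By the universal property of $V$, any $n$-tuple $(v_1,\dots,v_n)$ in a commutative nonunitary algebra $W$ is the image of $(a_1,\dots,a_n)$ under a unique morphism $\alpha\colon V\to W$, and then naturality gives $\Psi_W(v_1\cdots v_n)=T(\alpha)\bigl(\Psi_V(a_1\cdots a_n)\bigr)$; so $\Psi$ is completely determined by the elements $\Psi_V(a_1\cdots a_n)$, and it is enough to show $\Psi_V(a_1\cdots a_n)=\mu^n\,a_1\cdots a_n$ for one $\mu$ independent of $n$.

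The first tool is the family of scaling endomorphisms: for $t\in\K$ and $i\in[n]$, let $\alpha_t^{(i)}\colon V\to V$ send $a_i\mapsto ta_i$ and fix the other generators; then $T(\alpha_t^{(i)})$ multiplies a word $w\in T(V)$ by $t^{d}$ where $d$ is the total degree of $w$ in $a_i$. Naturality applied to $a_1\cdots a_n$ gives $T(\alpha_t^{(i)})\bigl(\Psi_V(a_1\cdots a_n)\bigr)=t\,\Psi_V(a_1\cdots a_n)$ for all $t$, and comparing coefficients shows that every word occurring in $\Psi_V(a_1\cdots a_n)$ has total degree exactly $1$ in each $a_i$. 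For $n=1$ the only such word is the single letter $a_1$, so $\Psi_V(a_1)=\mu\,a_1$ for some $\mu\in\K$, and hence $\Psi_W(v)=\mu v$ for all $v$ in all $W$; this fixes $\mu$ and settles the base case.

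Now I would induct on $n$, assuming $\Psi_W(v_1\cdots v_k)=\mu^k v_1\cdots v_k$ whenever $k<n$. Since $\Psi_V$ is a coalgebra morphism with $\Psi_V(1)=1$ and respects the counit, applying $\Psi_V\otimes\Psi_V$ to $\Delta(a_1\cdots a_n)=\sum_{i=0}^n(a_1\cdots a_i)\otimes(a_{i+1}\cdots a_n)$ and using the induction hypothesis on the factors with $1\leq i\leq n-1$ gives $\tdelta\bigl(\Psi_V(a_1\cdots a_n)\bigr)=\mu^n\,\tdelta(a_1\cdots a_n)$. As the kernel of $\tdelta$ on $\ker(\varepsilon_\Delta)$ is exactly the space $V$ of length-one words, we get $\Psi_V(a_1\cdots a_n)=\mu^n a_1\cdots a_n+p$ with $p\in V$; the degree constraint of the previous paragraph then forces $p=\gamma\,(a_1\cdot\,\cdots\,\cdot a_n)$ for some $\gamma\in\K$, the product taken in $V$. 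Running the same coalgebra-plus-degree argument on each permuted word $a_{\sigma(1)}\cdots a_{\sigma(n)}$, and comparing with naturality under the generator-permuting automorphism of $V$ (using commutativity of the product of $V$), shows $\Psi_V(a_{\sigma(1)}\cdots a_{\sigma(n)})=\mu^n a_{\sigma(1)}\cdots a_{\sigma(n)}+\gamma\,(a_1\cdot\,\cdots\,\cdot a_n)$ with the \emph{same} $\gamma$ for every $\sigma\in\sym_n$. Finally, summing these over $\sigma$ and using that $\Psi_V$ is an algebra morphism for $\shuffle$, together with $a_1\shuffle\cdots\shuffle a_n=\sum_{\sigma\in\sym_n}a_{\sigma(1)}\cdots a_{\sigma(n)}$ and $\Psi_V(a_j)=\mu a_j$, yields $\mu^n\sum_\sigma a_{\sigma(1)}\cdots a_{\sigma(n)}=\mu^n\sum_\sigma a_{\sigma(1)}\cdots a_{\sigma(n)}+n!\,\gamma\,(a_1\cdot\,\cdots\,\cdot a_n)$, so $\gamma=0$ since $\K$ has characteristic zero. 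This gives $\Psi_V(a_1\cdots a_n)=\mu^n a_1\cdots a_n$, closing the induction, hence $\Psi=\Phi^{(\mu)}$ and $\Upsilon=\Theta\circ\Phi^{(\mu)}$.

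The routine parts are the reduction through $\Theta$, the scaling argument, and the remark that $\Phi^{(\mu)}$ is indeed a natural transformation from $\Sh$ to $\Sh$ valued in Hopf algebras (it rescales length-$n$ words by $\mu^n$, hence respects the shuffle product, deconcatenation, and all the maps $T(\alpha)$, which one should record). The delicate point is the vanishing of the primitive correction term $\gamma\,(a_1\cdot\,\cdots\,\cdot a_n)$: this is where one must use multiplicativity of $\Psi$ for $\shuffle$, the $\sym_n$-symmetry obtained from the generator-permuting automorphisms, commutativity of $V$, and the characteristic-zero hypothesis all at once — nothing weaker will pin the coefficient down.
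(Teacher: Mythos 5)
Your proof is correct, but it takes a genuinely different route from the paper's. The paper works directly with $\Upsilon:\Sh\longrightarrow\QSh$ and leans on the cofreeness of the deconcatenation coalgebra $(T(V),\Delta)$: any counit-preserving coalgebra endomorphism is reconstructed from its corestriction $\varpi_V=\pi_V\circ\Upsilon_V$ via $\Upsilon_V(w)=\sum\varpi_V(w_1)\ldots\varpi_V(w_k)$, so the whole problem collapses to identifying the single map $\varpi_V$. Scaling endomorphisms of the free algebra then give $\varpi_V(v_1\ldots v_n)=\mu_n\, v_1\cdot\ldots\cdot v_n$, and multiplicativity (projected onto $V$) yields the recursion $(k+l)!\,\mu_{k+l}=k!\,\mu_k\, l!\,\mu_l$, hence $\mu_n=\mu^n/n!$, which is exactly the closed formula for $\Theta\circ\Phi^{(\mu)}$. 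You instead conjugate by $\Theta$ to reduce to natural endomorphisms of $\Sh$, and then run an induction on word length using $\ker(\tdelta)=V$; the price of avoiding cofreeness is the primitive correction term $\gamma\,(a_1\cdot\ldots\cdot a_n)$, which you correctly eliminate by combining the generator-permuting automorphisms (to get a $\sigma$-independent $\gamma$), multiplicativity for $\shuffle$ applied to $a_1\shuffle\cdots\shuffle a_n$, and characteristic zero. Both arguments rest on the same essential inputs -- naturality tested on free commutative algebras, scaling maps, multiplicativity, and division by factorials -- but the cofreeness formula makes the paper's version shorter by removing both the induction and the correction-term analysis, while yours is self-contained in that it never invokes the cofree property of $(T(V),\Delta)$. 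Your base case deserves one clarifying word: for $n=1$ the multidegree constraint alone (not primitivity) already forces $\Psi_V(a_1)\in\K a_1$, since a multidegree-one element of $T(S_+(\K a_1))$ must be a length-one word equal to a multiple of $a_1$; you state the right conclusion but it is worth recording that this is where it comes from.
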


\begin{proof}
Let $\Upsilon$ be a natural transformation from $\Sh$ to $\QSh$. For any commutative nonunitary algebra $V$, 
let us denote by $\pi_V:T(V)\longrightarrow V$ the canonical projection on $V$ and let us put $\varpi_V=\pi_V\circ  \Upsilon_V$.
As $\Upsilon_V$ is an endomorphism of the cofree coalgebra $(T(V),\Delta)$, for any nonempty word $w\in T(V)$,
\begin{align}
\label{equpsilon}
\Upsilon_V(w)&= \sum_{\substack{w=w_1\ldots w_k,\\ w_1,\ldots,w_k\neq \emptyset}} \varpi_V(w_1)\ldots \varpi_V(w_k).
\end{align}

Let $V$ be the augmentation ideal of $\K[X_1,\ldots,X_n]$. We consider $(\lambda_1,\ldots,\lambda_n)\in \K^n$
and the endomorphism $\alpha$ of $V$ defined by $\alpha(x_i)=\lambda_i X_i$. By naturality of $\Upsilon$, 
\begin{align*}
\lambda_1\ldots \lambda_n \Upsilon_V(X_1\ldots X_n)&=
\Upsilon_V\circ T(\alpha)(X_1\ldots X_n)
=T(\alpha)\circ \Upsilon_V(X_1\ldots X_n).
\end{align*}
Applying $\pi_V$, we obtain
\[\lambda_1\ldots \lambda_n \varpi_V(X_1\ldots X_n)=\alpha \circ \varpi_V(X_1\ldots X_n).\]
Therefore, there exists $\mu_n\in \K$ such that
\[\varpi_V(X_1\ldots X_n)=\mu_n X_1\cdot \ldots \cdot X_n.\]
Let $W$ be any nonunitary commutative algebra, $v_1,\ldots,v_n\in V$  and let $\beta:V\longrightarrow W$ be the morphism defined
by $\phi(X_i)=v_i$. By naturality of $\Upsilon$,
\[T(\beta)\circ \Upsilon_V(X_1\ldots X_n)=\Upsilon_W\circ T(\beta)(X_1\ldots X_n).\]
Applying $\pi_W$, we obtain
\[\beta \circ \varpi_V(X_1\ldots X_n)=\beta(\mu_n X_1\cdot \ldots \cdot X_n)=\mu_n v_1\cdot \ldots \cdot v_n=\varpi_W(v_1\ldots v_n).\]
We proved the existence of a family of scalars $(\mu_n)_{n\geq 0}$ such that for any commutative nonunitary algebra $V$,
for any $v_1,\ldots,v_n\in V$, $\varpi_V(v_1\ldots v_n)=\mu_n v_1\cdot \ldots \cdot v_n$.\\

Let us study this sequence $(\mu_n)_{n\geq 0}$. Let $V$ be the augmentation ideal of $\K[X]$. 
For any $k,l\geq 1$, as $\Upsilon_V$ is an algebra morphism from $\Sh(V)$ to $\QSh(V)$,
\begin{align*}
\varpi_V(X^{\otimes k}\shuffle X^{\otimes l})&=\frac{(k+l)!}{k!l!}\varpi\left(X^{\otimes(k+l)}\right)\\
&=\frac{(k+l)!}{k!l!} \mu_{k+l}X^{k+l},\\
&=\pi_V\left(\Upsilon_V\left(X^{\otimes k}\shuffle X^{\otimes l}\right)\right)\\
&=\pi_V\left(\Upsilon_V\left(X^{\otimes k}\right)\squplus \Upsilon_V\left(X^{\otimes k}\right)\right)\\
&=\varpi_V\left(X^{\otimes k}\right)\cdot \varpi_V\left(X^{\otimes l}\right)\\
&=\mu_k\mu_l X^{k+l}.
\end{align*}
Hence, $(k+l)! \mu_{k+l}=k!\mu_k l!\mu_l$. This implies that for any $k\in \N$, $\mu_k=\dfrac{\mu^k}{k!}$, with $\mu=\mu_1$.
Therefore, by (\ref{equpsilon}), for any nonunitary commutative algebra, for any nonempty word $w\in T(V)$,
\begin{align*}
\Upsilon_V(w)&= \sum_{\substack{w=w_1\ldots w_k,\\ w_1,\ldots,w_k\neq \emptyset}} 
\frac{\mu^{\ell(w_1)+\ldots+\ell(w_k)}}{\ell(w_1)!\ldots \ell(w_k)!} |w_1|\ldots |w_k|\\
&= \mu^{\ell(w)}\sum_{\substack{w=w_1\ldots w_k,\\ w_1,\ldots,w_k\neq \emptyset}} 
\frac{1}{\ell(w_1)!\ldots \ell(w_k)!} |w_1|\ldots |w_k|\\
&=\Theta_V\circ \Phi^{(\mu)}_V(w). 
\end{align*}
In other words, $\Upsilon=\Theta\circ \Phi^{(\mu)}$. 
\end{proof}

\begin{remark}
For any $\mu\in \K$, for any commutative nonunitary algebra $V$ $\Phi^{(\mu)}_V$ is indeed a Hopf algebra
endomorphism of $\Sh(V)$, as $\Sh(V)$ is graded by the length of words. 
\end{remark}

\subsection{Action on bialgebra morphisms}

We here fix a bialgebra $(V,\cdot,\delta_V)$, nonunitary, commutative and cocommutative.

\begin{notation}
\begin{enumerate}
\item Let $(B,m,\Delta)$ and $(B',m',\Delta')$ be bialgebras. We denote by $M_{B\rightarrow B'}$ 
the set of bialgebra morphisms from
$(B,m,\Delta)$ to $(B',m',\Delta')$. 
\item Let $(B,m,\Delta,\rho)$ and $(B',m',\Delta',\rho')$ be bialgebras over $V$. We denote by $M^\rho_{B\rightarrow B'}$
the set of morphisms of bialgebra over $V$ from $B$ to $B'$, that is to say morphisms both of bialgebras 
and of comodules over $V$.
\end{enumerate}
\end{notation}

\begin{prop}\label{prop5.19}
Let $(B,m,\Delta,\delta,\rho)$ be a double bialgebra over $V$  and $(B',m',\Delta',\rho')$ be a bialgebra over $V$. 
The following map is a right action of the monoid of characters $(\Char(B),\star)$ attached to $(B,m,\delta)$ 
on $M^\rho_{B\rightarrow B'}$,
\begin{align*}
\leftsquigarrow&:\left\{\begin{array}{rcl}
M^\rho_{B\rightarrow B'}\times \Char(B)&\longrightarrow&M^\rho_{B\rightarrow B'}\\
(\phi,\lambda)&\longrightarrow&\phi\leftsquigarrow\lambda=(\phi\otimes \lambda)\circ \delta.
\end{array}\right.
\end{align*}
\end{prop}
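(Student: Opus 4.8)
The plan is to establish two things: (i) that $\phi\leftsquigarrow\lambda=(\phi\otimes\lambda)\circ\delta$ again lies in $M^\rho_{B\rightarrow B'}$ for every $\phi\in M^\rho_{B\rightarrow B'}$ and every $\lambda\in\Char(B)$, and (ii) that $\leftsquigarrow$ satisfies the two axioms of a right action, $\phi\leftsquigarrow\varepsilon_\delta=\phi$ and $(\phi\leftsquigarrow\lambda)\leftsquigarrow\mu=\phi\leftsquigarrow(\lambda\star\mu)$. I first recall why $(\Char(B),\star,\varepsilon_\delta)$ is a monoid: since $B$ is a double bialgebra, $(B,m,\delta)$ is a bialgebra, so the convolution $\lambda\star\mu=(\lambda\otimes\mu)\circ\delta$ of two characters of $(B,m)$ is again a character, and associativity and unitality of $\star$ (with unit $\varepsilon_\delta$) follow from coassociativity and counitality of $\delta$.

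For the well-definedness step I would verify, writing $\delta(x)=x'\otimes x''$, the five conditions of Definition \ref{defioverV} defining a morphism of bialgebras over $V$, reducing each to an identity already at hand. The unit and algebra-morphism conditions follow from $\delta(1_B)=1_B\otimes1_B$, from $\lambda(1_B)=1$, and from $\delta$, $\phi$, $\lambda$ all being algebra morphisms. The counit condition $\varepsilon_{\Delta'}\circ(\phi\leftsquigarrow\lambda)=\varepsilon_\Delta$ follows from the double-bialgebra identity $(\varepsilon_\Delta\otimes\id)\circ\delta(x)=\varepsilon_\Delta(x)1_B$ together with $\varepsilon_{\Delta'}\circ\phi=\varepsilon_\Delta$ and $\lambda(1_B)=1$. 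The coproduct condition is the first substantive one: using that $\phi$ is a coalgebra morphism one gets $\Delta'\circ(\phi\leftsquigarrow\lambda)=(\phi\otimes\phi\otimes\lambda)\circ(\Delta\otimes\id)\circ\delta$, then one inserts the cointeraction axiom $(\Delta\otimes\id)\circ\delta=m_{1,3,24}\circ(\delta\otimes\delta)\circ\Delta$ (with $m_{1,3,24}\colon a\otimes b\otimes c\otimes d\mapsto a\otimes c\otimes bd$) and invokes the multiplicativity of $\lambda$ to turn $\lambda$ of a product into a product of values of $\lambda$; the result is exactly $((\phi\leftsquigarrow\lambda)\otimes(\phi\leftsquigarrow\lambda))\circ\Delta$. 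The comodule-morphism condition is the second substantive one: from the compatibility of Definition \ref{defi1.4}, $(\id\otimes c)\circ(\rho\otimes\id)\circ\delta=(\delta\otimes\id)\circ\rho$, and from $\rho'\circ\phi=(\phi\otimes\id_V)\circ\rho$, one rewrites $\rho'\circ(\phi\leftsquigarrow\lambda)=(\phi\otimes\id_V\otimes\lambda)\circ(\rho\otimes\id_B)\circ\delta$ and, feeding in the compatibility, obtains $((\phi\leftsquigarrow\lambda)\otimes\id_V)\circ\rho$. In the nonunitary case nothing more is needed, because being a morphism of bialgebras over $V$ rather than over $uV$ is a condition on the source and target (both already bialgebras over $V$), not on the morphism.

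The action axioms are then formal. The identity $\phi\leftsquigarrow\varepsilon_\delta=(\phi\otimes\varepsilon_\delta)\circ\delta=\phi\circ(\id\otimes\varepsilon_\delta)\circ\delta=\phi$ uses only counitality of $\delta$. For compatibility with $\star$, both $(\phi\leftsquigarrow\lambda)\leftsquigarrow\mu$ and $\phi\leftsquigarrow(\lambda\star\mu)$ unfold to $(\phi\otimes\lambda\otimes\mu)\circ(\delta\otimes\id)\circ\delta$ and to $(\phi\otimes\lambda\otimes\mu)\circ(\id\otimes\delta)\circ\delta$ respectively, and these coincide by coassociativity of $\delta$; hence $\leftsquigarrow$ is a right action of $(\Char(B),\star)$ on $M^\rho_{B\rightarrow B'}$.

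The only nonroutine points are the coproduct and comodule checks inside the well-definedness step: the coproduct check is where the cointeraction law of the double bialgebra must be combined with the multiplicativity of $\lambda$, and the comodule check is where the defining compatibility of a double bialgebra over $V$ is used; all the rest is bookkeeping with the coassociative, counital algebra morphism $\delta$.
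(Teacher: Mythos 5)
Your proof is correct and follows essentially the same route as the paper's: you verify the bialgebra-morphism conditions via the cointeraction law $(\Delta\otimes\id)\circ\delta=m_{1,3,24}\circ(\delta\otimes\delta)\circ\Delta$ together with multiplicativity of $\lambda$, verify the comodule condition via the compatibility of Definition \ref{defi1.4} and $\rho'\circ\phi=(\phi\otimes\id_V)\circ\rho$, and deduce the action axioms from coassociativity and counitality of $\delta$. The only differences are cosmetic: you additionally spell out why $(\Char(B),\star)$ is a monoid and remark on the nonunitary case, both of which the paper leaves implicit.
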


\begin{proof}
Let $(\phi,\lambda)\in M^\rho_{B\rightarrow B'}\times \Char(B)$. Let us prove that $\psi=(\phi\otimes \lambda)\circ \delta$
is a bialgebra morphism. As $\phi$, $\lambda$ and $\delta$ are algebra morphisms, by composition
$\psi$ is an algebra morphism. 
\begin{align*}
\Delta'\circ \psi&=\Delta'\circ (\phi\otimes \lambda)\circ \delta\\
&=(\phi\otimes \phi)\circ \Delta \circ (\id \otimes \lambda)\circ \delta\\
&=(\phi\otimes \phi\otimes \lambda)\circ (\Delta \otimes \id)\circ \delta\\
&=(\phi\otimes \phi\otimes \lambda)\circ m_{1,3,24}\circ (\delta\otimes \delta)\circ \Delta\\
&=(\phi\otimes \lambda\otimes \phi\otimes \lambda)\circ (\delta\otimes \delta)\circ \Delta\\
&=(\psi \otimes \psi)\circ \Delta.
\end{align*}
We used that $\lambda$ is a character for the fifth equality. Moreover,
\begin{align*}
\varepsilon'_\Delta\circ \Psi&=(\varepsilon'_\Delta\otimes \lambda)\circ \delta
=\lambda \circ \eta \circ \varepsilon_\Delta
=\varepsilon_\Delta,
\end{align*}
as $\lambda(1_B)=1$ so $\lambda \circ \eta=\id_\K$. 
So $\psi \in M_{B\rightarrow B'}$. Let us now prove that $\psi$ is a comodule morphism.
As $\rho'\circ \phi=(\phi\otimes \id)\circ \rho$,
\begin{align*}
\rho' \circ \psi&=\rho'\circ(\phi\otimes \lambda)\circ \delta\\
&=(\phi\otimes \id \otimes \lambda)\circ (\rho\otimes \id)\circ \delta\\
&=(\phi\otimes \id \otimes \lambda)\circ (\id \otimes c)\circ (\delta\otimes \id)\circ \rho\\
&=(\phi\otimes \lambda\otimes \id)\circ (\delta\otimes \id)\circ \rho\\
&=(\psi\otimes \id)\circ \rho.
\end{align*}
So $\psi\in M^\rho_{B\rightarrow B'}$.\\

Let $\phi\in M^\rho_{B\rightarrow B'}$, $\lambda,\mu\in \Char(B)$.
\begin{align*}
(\phi\leftsquigarrow\lambda)\leftsquigarrow\mu&=(\phi\otimes \lambda \otimes \mu)\circ (\delta \otimes \id)\circ \delta\\
&=(\phi\otimes \lambda \otimes \mu)\circ (\id \otimes \delta)\circ \delta\\
&=(\phi \otimes \lambda \star \mu)\circ \delta\\
&=\phi \leftsquigarrow(\lambda \star \mu).
\end{align*}
Moreover,
\[\phi \leftsquigarrow \epsilon_\delta=(\phi \otimes \epsilon_\delta)\circ \delta=\phi.\]
Therefore,  $\leftsquigarrow$ is an action. \end{proof}

Moreover, any bialgebra morphism is compatible with these actions:

\begin{prop}\label{propactionsmorphismes}
Let $(B,m,\Delta,\delta,\rho)$ be a double bialgebra over $V$  and $B'$ and $B''$ be bialgebras over $V$. 
For any morphisms $\phi:B\longrightarrow B'$ and $\psi:B'\longrightarrow B''$ of bialgebras over $V$,
for any character $\lambda$ of $B$,
\[(\psi\circ \phi)\leftsquigarrow \lambda=\psi\circ (\phi\leftsquigarrow \lambda).\]
\end{prop}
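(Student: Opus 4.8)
The plan is to expand the definition of the action $\leftsquigarrow$ from Proposition~\ref{prop5.19} on both sides of the claimed identity and to observe that everything collapses by associativity of composition of linear maps together with the canonical identification $-\otimes\K\cong\id$. First I would record that the statement is meaningful: $\psi\circ\phi$ is again a morphism of bialgebras over $V$ (a composite of such), so by Proposition~\ref{prop5.19}, applied to the double bialgebra $B$ over $V$ and to the bialgebras over $V$ given by $B'$ and $B''$, both $(\psi\circ\phi)\leftsquigarrow\lambda\in M^\rho_{B\rightarrow B''}$ and $\phi\leftsquigarrow\lambda\in M^\rho_{B\rightarrow B'}$ are well defined, hence so is $\psi\circ(\phi\leftsquigarrow\lambda)\in M^\rho_{B\rightarrow B''}$, and both members of the equality live in the same set $M^\rho_{B\rightarrow B''}$.

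Then I would carry out the short computation. By definition, $(\psi\circ\phi)\leftsquigarrow\lambda=\big((\psi\circ\phi)\otimes\lambda\big)\circ\delta$. Since $(\psi\circ\phi)\otimes\lambda=(\psi\otimes\id_\K)\circ(\phi\otimes\lambda)$ as maps $B\otimes B\longrightarrow B''\otimes\K$, and $\psi\otimes\id_\K$ becomes $\psi$ under the identification $B''\otimes\K\cong B''$, we get $\big((\psi\circ\phi)\otimes\lambda\big)\circ\delta=\psi\circ(\phi\otimes\lambda)\circ\delta=\psi\circ(\phi\leftsquigarrow\lambda)$, which is exactly the desired identity. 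Equivalently, and perhaps more transparently for the reader, I would verify it pointwise with Sweedler's notation $\delta(x)=x'\otimes x''$: for $x\in B$ one has $\big((\psi\circ\phi)\leftsquigarrow\lambda\big)(x)=\psi(\phi(x'))\,\lambda(x'')=\psi\big(\phi(x')\,\lambda(x'')\big)=\psi\big((\phi\leftsquigarrow\lambda)(x)\big)$, using only that $\psi$ is linear.

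The main obstacle is essentially nonexistent: this is pure bookkeeping about composition of linear maps and the unit identification, and neither the character property of $\lambda$ nor the compatibility of $\delta$ with $\rho$ is needed beyond what Proposition~\ref{prop5.19} already supplies to make both sides well defined. The only point requiring a little care is to keep track of which tensor slot $\lambda$ is applied to versus which slot $\psi$ acts on, so that the two sides are seen to be literally the same morphism rather than merely related by a reordering of factors.
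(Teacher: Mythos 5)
Your proposal is correct and follows exactly the paper's own one-line computation: expand $(\psi\circ\phi)\leftsquigarrow\lambda=((\psi\circ\phi)\otimes\lambda)\circ\delta$ and pull $\psi$ out of the first tensor factor. The extra remarks on well-definedness and the Sweedler-notation check are harmless elaborations of the same argument.
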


\begin{proof}
Indeed,
\begin{align*}
(\psi\circ \phi)\leftsquigarrow \lambda&=((\psi\circ \phi)\otimes \lambda)\circ \delta
=\psi\circ (\phi\otimes \lambda)\circ \delta=\psi\circ (\phi\leftsquigarrow \lambda). \qedhere
\end{align*}
\end{proof}

\begin{cor}\label{cormorphismedouble}
Let $(B,m,\Delta,\delta,\rho)$ be a connected double bialgebra over $V$.
Let us denote by $\phi_1:B\longrightarrow \qsh(V)$ the unique morphism of double bialgebras of Theorem \ref{theo7.10}.
The following maps are bijections, inverse one from the other:
\begin{align*}
\theta&:\left\{\begin{array}{rcl}
\Char(B)&\longrightarrow&M^\rho_{B\rightarrow \qsh(V)}\\
\lambda&\longrightarrow&\phi_1\leftsquigarrow \lambda,
\end{array}\right.&
\theta'&:\left\{\begin{array}{rcl}
M^\rho_{B\rightarrow \qsh(V)}&\longrightarrow&\Char(B)\\
\phi&\longrightarrow&\epsilon_\delta \circ \phi.
\end{array}\right.&
\end{align*}
\end{cor}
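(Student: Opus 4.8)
The plan is to prove the statement by checking that the composites $\theta'\circ\theta$ and $\theta\circ\theta'$ are both the identity; the only nontrivial inputs will be Theorem~\ref{theomorphismes} (uniqueness of a morphism of bialgebras over $V$ from $B$ to $\qsh(V)$ realizing a given character via $\epsilon_\delta$), Theorem~\ref{theo7.10} (the morphism $\phi_1$ exists and satisfies $\epsilon_\delta\circ\phi_1=\epsilon_\delta$), and Proposition~\ref{prop5.19} (the map $\leftsquigarrow$ is a well-defined action with values in $M^\rho_{B\rightarrow\qsh(V)}$). First I would record that both maps are well defined: $\theta(\lambda)=\phi_1\leftsquigarrow\lambda$ lies in $M^\rho_{B\rightarrow\qsh(V)}$ by Proposition~\ref{prop5.19}, and $\theta'(\phi)=\epsilon_\delta\circ\phi$ is a character of $(B,m)$ because $\phi$ is an algebra morphism into $(\qsh(V),\squplus)$ and $\epsilon_\delta$ is a character of $(\qsh(V),\squplus)$.

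Next I would compute $\epsilon_\delta\circ(\phi_1\leftsquigarrow\lambda)$ for an arbitrary character $\lambda$ of $B$:
\[
\epsilon_\delta\circ(\phi_1\leftsquigarrow\lambda)=\epsilon_\delta\circ(\phi_1\otimes\lambda)\circ\delta=((\epsilon_\delta\circ\phi_1)\otimes\lambda)\circ\delta=(\epsilon_\delta\otimes\lambda)\circ\delta=\lambda,
\]
where the third equality uses $\epsilon_\delta\circ\phi_1=\epsilon_\delta$ from Theorem~\ref{theo7.10} and the last uses the counit axiom $(\epsilon_\delta\otimes\id)\circ\delta=\id_B$. This identity immediately gives $\theta'\circ\theta=\id_{\Char(B)}$.

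For $\theta\circ\theta'=\id$, I would take $\phi\in M^\rho_{B\rightarrow\qsh(V)}$, set $\lambda=\theta'(\phi)=\epsilon_\delta\circ\phi$, and observe that $\phi_1\leftsquigarrow\lambda\in M^\rho_{B\rightarrow\qsh(V)}$ satisfies $\epsilon_\delta\circ(\phi_1\leftsquigarrow\lambda)=\lambda=\epsilon_\delta\circ\phi$ by the displayed computation; the uniqueness part of Theorem~\ref{theomorphismes} then forces $\phi_1\leftsquigarrow\lambda=\phi$, i.e.\ $\theta(\theta'(\phi))=\phi$. This finishes the proof that $\theta$ and $\theta'$ are mutually inverse bijections; equivalently, the action $\leftsquigarrow$ of $\Char(B)$ on $M^\rho_{B\rightarrow\qsh(V)}$ is simply transitive with $\phi_1$ as basepoint. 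The only step needing any care is the displayed identity $\epsilon_\delta\circ(\phi_1\leftsquigarrow\lambda)=\lambda$, which is exactly where Theorem~\ref{theo7.10} and the counit property of $\delta$ enter; everything else is a formal consequence of the uniqueness in Theorem~\ref{theomorphismes}, so there is no real obstacle beyond assembling these three ingredients correctly.
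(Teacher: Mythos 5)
Your argument is correct and is essentially the paper's own proof: both verify $\epsilon_\delta\circ(\phi_1\leftsquigarrow\lambda)=\epsilon_\delta\star\lambda=\lambda$ (giving $\theta'\circ\theta=\id$) and then invoke the uniqueness part of Theorem~\ref{theomorphismes} to get $\theta\circ\theta'=\id$. The only cosmetic difference is that you spell out the well-definedness of $\theta$ and $\theta'$ explicitly, which the paper leaves implicit.
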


\begin{proof}
Let $\phi \in M^\rho_{B\rightarrow \qsh(V)}$. We put $\phi'=\theta \circ \theta'$ and $\lambda=\epsilon_\delta \circ \phi$.
Then
\[\epsilon_\delta \circ \phi'=\epsilon_\delta \circ (\phi_1\leftsquigarrow \lambda)
=(\epsilon_\delta \circ \phi_1)\star \lambda=\epsilon_\delta \star \lambda=\lambda=\epsilon_\delta \circ \phi.\]
By the uniqueness in Theorem \ref{theomorphismes}, $\phi=\phi'$.\\

Let $\lambda \in \Char(B)$ and let $\lambda'=\theta'\circ \theta(\lambda)$. Then
\[\lambda'=\epsilon_\delta \circ (\phi_1\leftsquigarrow \lambda)=(\epsilon_\delta\circ \phi_1\otimes \lambda)\circ \delta
=(\epsilon_\delta \otimes \lambda)\circ \delta=\epsilon_\delta\star \lambda=\lambda.\]
So $\theta$ and $\theta'$ are bijections, inverse one from the other. \end{proof}

\begin{cor}\label{cor6.15}
\begin{enumerate}
\item Let $(B,m,\Delta,\delta)$ be a connected double bialgebra.
Let us denote by $\phi_1$ the unique morphism of double bialgebras from $B$ to $\K[X]$ of Theorem \ref{theo7.10}.
The following maps are bijections, inverse one from the other:
\begin{align*}
\theta&:\left\{\begin{array}{rcl}
\Char(B)&\longrightarrow&M_{B\rightarrow \K[X]}\\
\lambda&\longrightarrow&\phi_1\leftsquigarrow \lambda,
\end{array}\right.&
\theta'&:\left\{\begin{array}{rcl}
M_{B\rightarrow \K[X]}&\longrightarrow&\Char(B)\\
\phi&\longrightarrow&\epsilon_\delta \circ \phi.
\end{array}\right.&
\end{align*}
\item Let $(B,m,\Delta,\delta)$ be a connected, graded double bialgebra such that for any $n\in \N$,
\[\delta(B_n)\subseteq B_n \otimes B.\]
Let us denote by $\phi_1$ the unique homogeneous morphism of double bialgebras from $B$ to $\QSym$ of Theorem \ref{theo7.10}.
We denote by $M^0_{B\rightarrow \QSym}$ the set of bialgebra morphisms from $(B,m,\Delta)$
to $(\QSym, \squplus,\Delta)$ which are homogeneous of degree $0$.
The following maps are bijections, inverse one from the other:
\begin{align*}
\theta&:\left\{\begin{array}{rcl}
\Char(B)&\longrightarrow&M^0_{B\rightarrow \QSym}\\
\lambda&\longrightarrow&\phi_1\leftsquigarrow \lambda,
\end{array}\right.&
\theta'&:\left\{\begin{array}{rcl}
M^0_{B\rightarrow \QSym}&\longrightarrow&\Char(B)\\
\phi&\longrightarrow&\epsilon_\delta \circ \phi.
\end{array}\right.&
\end{align*}
\item Let $\Omega$ be a commutative monoid and  let $(B,m,\Delta,\delta)$ be a connected, $\Omega$-graded double bialgebra,
connected as a coalgebra, such that for any $\alpha\in \Omega$,
\[\delta(B_\alpha)\subseteq B_\alpha \otimes B.\]
Let us denote by $\phi_1$ the unique homogeneous morphism of double bialgebras 
from $B$ to $\qsh(\K\Omega)$ of Theorem \ref{theo7.10}.
We denote by $M^0_{B\rightarrow \qsh(\K\Omega)}$ the set of bialgebra morphisms from $(B,m,\Delta)$
to $\qsh(\K\Omega)$ which are homogeneous of degree the unit of $\Omega$.
The following maps are bijections, inverse one from the other:
\begin{align*}
\theta&:\left\{\begin{array}{rcl}
\Char(B)&\longrightarrow&M^0_{B\rightarrow \qsh(\K\Omega)}\\
\lambda&\longrightarrow&\phi_1\leftsquigarrow \lambda,
\end{array}\right.&
\theta'&:\left\{\begin{array}{rcl}
M^0_{B\rightarrow \qsh(\K\Omega)}&\longrightarrow&\Char(B)\\
\phi&\longrightarrow&\epsilon_\delta \circ \phi.
\end{array}\right.&
\end{align*}
\end{enumerate}
\end{cor}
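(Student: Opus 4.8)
The plan is to deduce all three items from Corollary~\ref{cormorphismedouble} by specializing the bialgebra $V$ --- to $V=\K$ for item~1, to $V=\K(\N_{>0},+)$ for item~2, and to $V=\K\Omega$ for item~3 --- and then translating the notions of connected double bialgebra over $V$, of $\qsh(V)$, and of morphism of bialgebras over $V$ into the language of the statement, by means of the examples worked out in Section~1. In each case Corollary~\ref{cormorphismedouble} already provides mutually inverse bijections between $\Char(B)$ and $M^\rho_{B\rightarrow\qsh(V)}$, built from the distinguished double bialgebra morphism $\phi_1$ of Theorem~\ref{theo7.10} (whose existence, with explicit formula, is recorded in Corollary~\ref{cor7.8}); so no new computation is needed, only the identifications.

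For item~1 I would take $V=\K$, treated throughout as a nonunitary bialgebra, so that $u\K\cong\K(\Z/2\Z,\times)$. By the examples of Section~1, every double bialgebra $(B,m,\Delta,\delta)$ is then a double bialgebra over $\K$, with $\rho(x)=\varepsilon_\Delta(x)1_B\otimes 1+(x-\varepsilon_\Delta(x)1_B)\otimes X$; since the underlying $\Z/2\Z$-grading is trivially bounded, $B$ is connected as a double bialgebra over $\K$ exactly when it is connected as a coalgebra. I would then observe that a bialgebra morphism between bialgebras over $\K$ automatically commutes with the coactions (it sends $1_B$ to $1_{B'}$ and $\ker\varepsilon_\Delta$ into $\ker\varepsilon_{\Delta'}$), so that $M^\rho_{B\rightarrow\qsh(\K)}=M_{B\rightarrow\qsh(\K)}$. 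Finally I would transport the bijections of Corollary~\ref{cormorphismedouble} through the double bialgebra isomorphism $\morH:\qsh(\K)\xrightarrow{\sim}\K[X]$ of Lemma~\ref{lemmeKX}: since $\epsilon_\delta\circ\morH=\epsilon_\delta$, this transport is compatible with the maps $\phi\mapsto\epsilon_\delta\circ\phi$, and it carries the $\phi_1$ of Theorem~\ref{theo7.10} to the morphism $B\rightarrow\K[X]$ of Corollary~\ref{cor7.8}(1); this gives item~1.

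For items~2 and~3 I would argue in the same way, using the $\K\Omega$-examples of Section~1. For item~2, with $V=\K(\N_{>0},+)$ one has $\qsh(V)=\QSym$, and a double bialgebra over $V$ is precisely an $\N$-graded double bialgebra $B$ with $B_0=\K1_B$ such that $\delta(B_n)\subseteq B_n\otimes B$; by Proposition~\ref{prop6.9}, since $\bigcap_{n\geq 1}V^{\cdot n}=(0)$, such a $B$ is automatically connected as a coalgebra, so its hypotheses match those of Corollary~\ref{cormorphismedouble}. For item~3, with $V=\K\Omega$, a double bialgebra over $V$ that is connected as a coalgebra is precisely a connected $\Omega$-graded double bialgebra, connected as a coalgebra, with $\delta(B_\alpha)\subseteq B_\alpha\otimes B$. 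In both cases the condition $\rho'\circ\phi=(\phi\otimes\id)\circ\rho$ on a bialgebra morphism $\phi$ amounts to $\phi$ preserving the grading, that is, to $\phi$ being homogeneous of degree $0$ (respectively of degree the unit of $\Omega$); hence $M^\rho_{B\rightarrow\qsh(V)}$ coincides with $M^0_{B\rightarrow\QSym}$ (respectively $M^0_{B\rightarrow\qsh(\K\Omega)}$), and $\phi_1$ is the homogeneous double bialgebra morphism of Theorem~\ref{theo7.10} (Corollary~\ref{cor7.8}(2) and~(3)). Corollary~\ref{cormorphismedouble} then yields the asserted bijections $\theta,\theta'$.

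I do not expect a genuine obstacle: the statement is a corollary precisely because all the substance already lives in Corollary~\ref{cormorphismedouble}, Theorem~\ref{theo7.10}, Proposition~\ref{prop6.9} and Lemma~\ref{lemmeKX}. The only points demanding care are bookkeeping ones: verifying that connectedness as a coalgebra is the connectedness hypothesis actually required by Corollary~\ref{cormorphismedouble} in each specialization (it is part of the hypothesis in items~1 and~3, but must be derived from Proposition~\ref{prop6.9} in item~2), checking the two identifications $M^\rho=M$ (item~1) and $M^\rho=M^0$ (items~2 and~3), and, for item~1, using the relation $\epsilon_\delta\circ\morH=\epsilon_\delta$ to justify the transport of $\theta$, $\theta'$ and of $\phi_1$ along $\morH$.
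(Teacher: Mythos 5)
Your proposal is correct and follows exactly the route the paper intends: the corollary is stated without proof precisely because it is the specialization of Corollary \ref{cormorphismedouble} to $V=\K$, $V=\K(\N_{>0},+)$ and $V=\K\Omega$, combined with the identifications of bialgebras over $V$ and of $V$-comodule morphisms from the examples of Section 1, and (for item 1) the transport along the isomorphism $\morH$ of Lemma \ref{lemmeKX}. The only bookkeeping point you leave slightly implicit is that the compatibility of the transport along $\morH$ with $\theta$ (not just with $\theta'$) is exactly Proposition \ref{propactionsmorphismes}, namely $\morH\circ(\phi_1\leftsquigarrow\lambda)=(\morH\circ\phi_1)\leftsquigarrow\lambda$.
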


\subsection{Applications to graphs}

We postpone the detailed construction of the double bialgebras of $V$-decorated graphs to a forthcoming paper \cite{Foissy41}. 
For any nonunitary commutative bialgebra $(V,\cdot,\delta_V)$, we obtain a double bialgebra over $V$ of 
$V$-decorated graphs $\calH_V[\bfG]$, generated by graphs $G$ which any vertex $v$ is decorated by an element $d_G(v)$, 
with conditions of linearity in each vertex.
For example, if $v_1,v_2,v_3,v_4\in V$ and $\lambda_2,\lambda_4\in \K$,
if $w_1=v_1+\lambda_2v_2$ and $w_2=v_3+\lambda_4 v_4$,
\begin{align*}
\tddeux{$w_1$}{$w_2$}\:&=\tddeux{$v_1$}{$v_3$}\:+\lambda_4\tddeux{$v_1$}{$v_4$}\:
+\lambda_2\tddeux{$v_2$}{$v_3$}\:+\lambda_2\lambda_4\tddeux{$v_2$}{$v_4$}\:.
\end{align*}
The product is given by the disjoint union of graphs, the decorations being untouched. For any graph $G$,
for any $X\subseteq V(G)$, we denote by $G_{\mid X}$ the graph defined by
\begin{align*}
G_{\mid X}&=X,&E(G_{\mid X})&=\{\{x,y\}\in E(G)\mid x,y\in X\}. 
\end{align*}
Then 
\[\Delta(G)=\sum_{V(G)=A\sqcup B} G_{\mid A}\otimes G_{\mid B},\]
the decorations being untouched. For any equivalence relation $\sim$ on $V(G)$:
\begin{itemize}
\item $G/\sim$ is the graph defined by
\begin{align*}
V(G/\sim)&=V(G)/\sim,&
E(G/\sim)&=\{\{\overline{x},\overline{y}\}\mid \{x,y\}\in E(G),\: \overline{x}\neq \overline{y}\},
\end{align*}
where for any $z\in V(G)$, $\overline{z}$ is its class in $V(G)/\sim$. 
\item $G\mid  \sim$ is the graph defined by
\begin{align*}
V(G\mid \sim)&=V(G),&
E(G\mid \sim)&=\{\{x,y\}\in E(G)\mid x\sim y\}.
\end{align*}
\item We shall say that $\sim \in \eq_c[G]$ if for any equivalence class $X$ of $\sim$, $G_{\mid X}$ is connected.
\end{itemize}
With these notations, the second coproduct $\delta$ is given by
\[\delta(G)=\sum_{\sim \in \eq_c[G]} G/\sim \otimes G\mid \sim.\]
Any vertex $w\in V(G/\sim)=V(G)/\sim$ is decorated by
\[\prod_{v\in w}^\cdot d_G(v)',\]
where the symbol $\displaystyle \prod^\cdot$ means that the product is taken in $V$ (recall that any vertex of $V(G/\sim)$
is a subset of $V(G)$). Any vertex $v\in V(G\mid \sim)=V(G)$ is decorated by $d_G(v)''$.
We use Sweedler's notation $\delta_V(v)=v'\otimes v''$, and it is implicit that in the expression of $\delta(G)$, 
everything is developed by multilinearity in the vertices. For example, if $v_1,v_2,v_3\in V$,
\begin{align*}
\Delta(\tdtroisdeux{$v_1$}{$v_2$}{$v_3$}\:)&=\tdtroisdeux{$v_1$}{$v_2$}{$v_3$}\:\otimes 1
+1\otimes \tdtroisdeux{$v_1$}{$v_2$}{$v_3$}\:+\tddeux{$v_1$}{$v_2$}\:\otimes \tdun{$v_3$}\:
+\tddeux{$v_2$}{$v_3$}\:\otimes \tdun{$v_1$}\:+\tdun{$v_1$}\:\tdun{$v_3$}\:\otimes \tdun{$v_2$}\:\\
&+\tdun{$v_3$}\:\otimes  \tddeux{$v_1$}{$v_2$}\:+\tdun{$v_1$}\:\otimes  \tddeux{$v_2$}{$v_3$}\:
+\tdun{$v_2$}\:\otimes \tdun{$v_1$}\:\tdun{$v_3$}\:,\\
\\
\delta(\tdtroisdeux{$v_1$}{$v_2$}{$v_3$}\:)&=
\tdtroisdeux{$v_1'$}{$v_2'$}{$v_3'$}\:\otimes \tdun{$v_1''$}\hspace{2mm}\tdun{$v_2''$}\hspace{2mm}
\tdun{$v_3''$}\hspace{2mm}+\tdun{$v_1'\cdot v_2'\cdot v_3'$}\hspace{10mm}\otimes 
\tdtroisdeux{$v_1''$}{$v_2''$}{$v_3''$}\:
+\tddeux{$v_1'\cdot v_2'$}{$v_3'$}\hspace{6mm} \otimes \tddeux{$v_1''$}{$v_2''$}\hspace{2mm}\tdun{$v_3''$}\hspace{2mm}
+\tddeux{$v_1'$}{$v_2'\cdot v_3'$}\hspace{6mm} \otimes \tddeux{$v_2''$}{$v_3''$}\hspace{2mm}\tdun{$v_1''$}\hspace{2mm}.
\end{align*}
For any $V$-decorated graph,
\[\epsilon_\delta(G)=\begin{cases}
\displaystyle \prod_{v\in V(G)} \epsilon_V(d_G(v)) \mbox{ if }E(G)=\emptyset,\\
0\mbox{ otherwise}. 
\end{cases}\]

\begin{prop}
For any graph $G$, we denote by $\col(G)$ the set of packed valid colourations of $G$, that is to say
surjective maps $c:V[G]\longrightarrow [\max(f)]$ such that for any $\{x,y\}\in E(g)$, $c(x)\neq c(y)$. 
We denote by $\Phi_1$ the unique morphism of double bialgebras over $V$ from $\calH_V[\bfG]$ to $\qsh(V)$. 
For any $V$-decorated graph $G$,
\[\Phi_1(G)=\sum_{c\in \col(G)} \left(\prod_{c(x)=1)}^\cdot d_V(x),\ldots,
\prod_{c(x)=\max(c))}^\cdot d_V(x)\right),\]
where for any vertex $x\in V(G)$, $d_V(x)\in V$ is its decoration.
\end{prop}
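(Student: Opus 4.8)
The plan is to apply the explicit formula for the unique double bialgebra morphism given by Theorem \ref{theo7.10}: since $\calH_V[\bfG]$ is graded by the number of vertices with degree-zero part $\K 1$, it is a connected double bialgebra over $V$, so for any nonempty $V$-decorated graph $G$,
\[\Phi_1(G)=\sum_{n=1}^\infty \bigl((\epsilon_\delta \otimes \id)\circ \rho\bigr)^{\otimes n}\circ \tdelta^{(n-1)}(G),\]
where $\rho$ is the coaction of $V$ on $\calH_V[\bfG]$ and $\epsilon_\delta$ its second counit; the sum is finite because $\tdelta^{(n-1)}(G)=0$ as soon as $n$ exceeds the number of vertices of $G$, and the empty graph is handled by $\Phi_1(1)=1$ (which matches the right-hand side, the empty colouration contributing the empty word). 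So the whole proof reduces to computing the three ingredients of this formula and assembling them, keeping in mind that everything must be developed multilinearly in the vertex decorations.

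First I would compute $(\epsilon_\delta\otimes\id)\circ\rho$ on a single graph $H$. The coaction is the graph analogue of the one on $\qsh(V)$: it sends $H$ to $H'\otimes\bigl(\prod_{x\in V(H)}^\cdot d_H(x)''\bigr)$, where $H'$ is $H$ with each vertex $x$ redecorated by $d_H(x)'$ (Sweedler's notation for $\delta_V$), the edge set being untouched. Since $\epsilon_\delta(H')$ vanishes unless $E(H)=\emptyset$ and then equals $\prod_x\epsilon_V(d_H(x)')$, the counit axiom $\epsilon_V(v')v''=v$ applied vertex by vertex gives that $(\epsilon_\delta\otimes\id)\circ\rho(H)$ equals $\prod_{x\in V(H)}^\cdot d_H(x)\in V$ if $H$ is edgeless, and $0$ otherwise. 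Next I would expand the iterated reduced coproduct: from $\Delta(G)=\sum_{V(G)=A\sqcup B}G_{\mid A}\otimes G_{\mid B}$ one gets $\tdelta^{(n-1)}(G)=\sum G_{\mid A_1}\otimes\cdots\otimes G_{\mid A_n}$, the sum running over ordered set partitions $(A_1,\ldots,A_n)$ of $V(G)$ into $n$ nonempty blocks.

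Combining these, a summand indexed by $(A_1,\ldots,A_n)$ survives the application of $\bigl((\epsilon_\delta\otimes\id)\circ\rho\bigr)^{\otimes n}$ exactly when every $G_{\mid A_i}$ is edgeless, i.e. when every block $A_i$ is a stable set of $G$, in which case it equals the word $\bigl(\prod_{x\in A_1}^\cdot d_G(x),\ldots,\prod_{x\in A_n}^\cdot d_G(x)\bigr)$. Finally I would identify the indexing data: the assignment $(A_1,\ldots,A_n)\mapsto(c:x\mapsto i\text{ when }x\in A_i)$ is a bijection between ordered set partitions of $V(G)$ into nonempty stable blocks and packed valid colourations $c\in\col(G)$ (surjectivity onto an initial segment $[n]$ corresponds to the blocks being nonempty and linearly ordered, properness to each block being stable), under which the word above is precisely $\bigl(\prod_{c(x)=1}^\cdot d_V(x),\ldots,\prod_{c(x)=\max(c)}^\cdot d_V(x)\bigr)$; this yields the claimed formula.

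I do not expect a genuine obstacle here. The only step requiring care is the evaluation of $(\epsilon_\delta\otimes\id)\circ\rho$, which depends on pinning down the exact form of the coaction $\rho$ on $\calH_V[\bfG]$ (splitting each decoration by $\delta_V$ and multiplying the second legs together in $V$, exactly as for $\qsh(V)$) and on a clean application of the counit axiom; the bijection between ordered stable set partitions and packed proper colourations is routine bookkeeping, as is the verification that the sum over $n$ terminates.
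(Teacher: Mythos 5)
Your proposal is correct and follows essentially the same route as the paper: apply the explicit formula of Theorem \ref{theo7.10}, expand $\tdelta^{(n-1)}(G)$ over ordered set partitions of $V(G)$ into nonempty blocks, observe that $(\epsilon_\delta\otimes\id)\circ\rho$ kills any block inducing an edge and otherwise returns the $\cdot$-product of its decorations, and reindex the surviving terms by packed valid colourations. The paper merely phrases the middle step as the factor $\epsilon_\delta(G_{\mid I_1})\ldots\epsilon_\delta(G_{\mid I_k})$ acting as the indicator of stability, which is the same computation you carry out via the counit axiom.
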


\begin{proof}
Let $G$ be a $V$-decorated graph. For any vertex $i$ of $G$, we denote by $v_i\in V$ the decoration of $i$.
The number of vertices of $G$ is denoted by $n$.
\begin{align*}
\Phi_1(G)&=\sum_{k=1}^n\sum_{\substack{V(G)=I_1\sqcup \ldots \sqcup I_k,\\ I_1,\ldots,I_k\neq \emptyset}}
\epsilon_\delta(G_{\mid I_1})\ldots \epsilon_\delta(G_{\mid I_k})
\left(\prod_{i\in I_1}^\cdot v_i,\ldots,\prod_{i\in I_k}^\cdot v_i\right)\\
&=\sum_{k=1}^n\sum_{c:V[G]\longrightarrow [k]\mbox{\scriptsize, surjective}}
\epsilon_\delta(G_{\mid c^{-1}(1)})\ldots \epsilon_\delta(G_{\mid c^{-1}(k)})
\left(\prod_{c(x)=1}^\cdot d_V(x),\ldots,\prod_{c(x)=k}^\cdot d_V(x) \right)\\
&=\sum_{c\in \col(G)} \left(\prod_{c(x)=1)}^\cdot d_V(x),\ldots,
\prod_{c(x)=\max(c))}^\cdot d_V(x)\right),
\end{align*}
as for any surjective map $c:V[G]\longrightarrow [\max(f)]$, 
\[\epsilon_\delta(G_{\mid c^{-1}(1)})\ldots \epsilon_\delta(G_{\mid c^{-1}(k)})=\begin{cases}
1\mbox{ if }c\in \col(G),\\
0\mbox{ otherwise}. 
\end{cases}\qedhere\]
\end{proof}

\begin{example} For any $v_1,v_2,v_3\in V$,
\begin{align*}
\Phi_1(\tddeux{$v_1$}{$v_2$}\:)&=v_1v_2+v_2v_1,\\
\Phi_1(\tdtroisdeux{$v_1$}{$v_2$}{$v_3$}\:)&=v_1v_2v_3+v_1v_3v_2+v_2v_1v_3+v_2v_3v_1+v_3v_1v_2+v_3v_2v_1
+(v_1\cdot v_3)v_2+v_2(v_1\cdot v_3),\\
\Phi_1(\hspace{1mm}\gdtroisun{\hspace{-4mm}$v_1$}{$v_3$}{\hspace{-1mm}$v_2$}\:)
&=v_1v_2v_3+v_1v_3v_2+v_2v_1v_3+v_2v_3v_1+v_3v_1v_2+v_3v_2v_1.
\end{align*}
\end{example}

If $V=\K$, we obtain the double bialgebra morphism $\phi_{chr}:\calH[\bfG]\longrightarrow \K[X]$, 
sending any graph on its chromatic polynomial. If $V$ is the algebra of the semigroup $({>0},+)$,
we obtain the morphism $\Phi_{chr}:\calH_V[\bfG]\longrightarrow \QSym$, sending any graphs
which vertices are decorated by positive integers to its chromatic (quasi)symmetric function \cite{Rosas2001}.

\bibliographystyle{amsplain}
\bibliography{biblio}

\end{document}